\documentclass[11pt, oneside]{article}   	

\usepackage{ amssymb }
\usepackage{amsfonts} 
\usepackage{amsmath}                           
\usepackage{amsthm}
\usepackage{tikz-cd}
\usepackage{subfig}
\usepackage{pgfplots}
\usepackage{lipsum}

\title{SUBALGEBRA DEPTH AND DOUBLE CROSSED PRODUCTS}
\author{Alberto Hern\'andez Alvarado \\ Escuela de Matem\'atica - CIMPA \footnote{Centro de Investigaciones en Matem\'atica Pura y Aplicada} \\Universidad de Costa Rica}
\date{}							

\begin{document}
\maketitle

\newtheorem{thm}{Theorem}[section]
\newtheorem{ex}[thm]{Example}
\newtheorem{de}[thm]{Definition}
\newtheorem{lem}[thm]{Lemma}
\newtheorem{pr}[thm]{Proposition}
\newtheorem{co}[thm]{Corollary}
\newtheorem{rem}[thm]{Remark}

\newcommand \rrhu {\mathrel{\mathpalette\lrhup\relax}}
\newcommand \lrhup[2]{\ooalign{$#1\rightharpoonup$\cr $#2 \hspace{1mm}\rightharpoonup$\cr}}

\newcommand \llhu {\mathrel{\mathpalette\rlhup\relax}}
\newcommand \rlhup[2]{\ooalign{$#1\leftharpoonup$\cr $#2 \hspace{1mm}\leftharpoonup$\cr}}

\newcommand \vtick{\textsc{\char13}}
\newcommand{\exedout}{\rule{0.8\textwidth}{0.5\textwidth}}

\begin{abstract}

In this paper we explore the concept of depth of a ring extension when the overall algebra factorises as a product of two subalgebras, in particular the case of finite dimensional Hopf algebras. As a result we generalise the results by Kadison and Young \cite{HKY} on depth of a Hopf algebra $H$ in its smash product with a finite dimensional left $H$-module algebra $A$, $A\#H$ to the context of generalised smash products $Q^{*op}\#_\psi H$ \cite{Bz1} where $Q$ is the quotient module coalgebra associated to the extension $R\subseteq H$ of finite dimensional Hopf algebras \cite{Ka2}\cite{HKY}\cite{H}. Moreover, following the construction of double crossed products in \cite{Ma} and  \cite{Ma1} we use our result on factorisation algebras to get a general result on the depth of the extension of a Hopf algebra $H$ in its Drinfel\vtick d double $D(H)$.
\footnote{$\mathbf{Keywords : }$ Depth, Factorisation Algebra, Smash Product, Drinfel\vtick d Double, Double Crossed Product, Normal Extension.}\footnote{$\mathbf{Subject}$ $\mathbf{classification :}$ $20C05$, $20G05$, $16W30$, $17B37$, $13E10$.}

\end{abstract}

\section{Introduction and preliminaries}

\subsection{Introduction}

The concept of depth \cite{KN},\cite{Ka1},\cite{KK1},\cite{BDK} relates a similarity between tensor powers of the regular representation of a ring extension $B\subseteq A$ with structure properties of the given extension. In a sense, for a finite dimensional Hopf algebra extension $R\subseteq H$ depth gives a measure of how close (or far) from being normal the extension is \cite{KN},\cite{KK1}. Normal extensions having depth two \cite{KK1} and depth one implying stronger forms of normality such as Hopf-Galois extensions \cite{Ka1}.

In \cite{Y} Young proves there is a relation between the depth of a Hopf algebra $H$ in its smash product with a $H$-module algebra $A$ and the module depth of $A$ in the category of $H$ modules, taking advantage of module depth defined in \cite{Ka2}. As a consequence of this relationship, Kadison and Young proved that the depth of a Hopf algebra $H$ in its Heisenberg double $H\#H^*$ is three. This result and theory is revisited and extended in \cite{HKY}.

It is natural then to ask the following questions: Is there a broader set up in which Young\vtick s result is explained as a consequence of a more general result? Given that for a finite dimensional Hopf algebra $H$, the Drinfel\vtick d and Heisenberg doubles coincide whenever $H$ is cocommutative, can we extend the Kadison and Young\vtick s result on the Heisenberg double to a result concerning the Drinfel\vtick d double?

In this work we give a positive answer to both questions, and factorisation algebras in the sense of \cite{Ma} are the key to solving this problems.

The layout of the paper is as follows: In Section \ref{subalgebra depth} we introduce the reader to definitions of depth as well as some of the most notable results and examples in recent years. Section \ref{FTC} is devoted to depth in finite tensor categories, the concept of module depth which is fundamental to solve our problem is defined here.

In Section \ref{Factorisation Algebras} we introduce factorisation algebras and prove a result for module depth in  Theorem \eqref{prop 1}. We also prove depth inequalities in Theorems \eqref{thm45} and \eqref{h depth ineq}, Inequality \eqref{minimum depth ineq} and Theorem \eqref{augmented algebra equality}. This inequalities mirror previous results in \cite{Ka2} and \cite{HKY}  that are the key components to describe depth of an algebra in a factorisation algebra. 

Section \ref{GSP} is a specialisation of Section \ref{Factorisation Algebras}, we use the machinery already developed to get a connection between the depth of a finite dimensional Hopf algebra extension $R\subseteq H$ with the module depth of certain $H$-module algebra, $Q^{*op}$ in its generalised factorised smash product  $Q^{*op} \subseteq Q^{*op} \#_{\overline{\psi}} H$ in the sense of \cite{Bz1}. In this section the main result is Theorem \eqref{gspd}.

Finaly in Section \ref{dcp} we introduce doble crossed products, as they were first described by Majid in \cite{Ma} and \cite{Ma1} ,which are factorisation algebras and use them to prove a series of results relating module depth and the Drinfel\vtick d double of a finite dimensional Hopf algebra $H$. In particular Theorems \eqref{double crossed product depth}, \eqref{depth of Q and D(H)} and \eqref{Drinfeld h depth} and finally provide a general result concerning the depth of a Hopf algebra $H$ in its Drinfel\vtick d double $D(H)$  in Theorem \eqref{drinfeld minimum depth}.

\subsection{Preliminaries on subalgebra depth}
\label{subalgebra depth}

Throughout this paper all algebras $A$ are associative and with unit over a field $k$ of characteristic zero. We will also consider them to be finite dimensional even though some of the results and definitions provided here are still valid in the infinite dimensional case.  For all ring extensions $B\subseteq A$ we assume $1_B = 1_A$. 

Let $A$ be a ring, we will denote the category of finite dimensional left $A$-modules as $_A\mathcal{M}$. Respectively $\mathcal{M}_A$ is the category of finite dimensional right $A$-modules. $_A\mathcal{M}_A$ is the category of $A$-$A$-bimodules.

Given a ring $A$ and two left $A$-modules $_AV$ and $_AW$ we say they are similar as $A$-modules  (and denote it $V \sim W$) if there are integers $p$ and $q$ such that $V|pW$ and $W|qV$. Where $mN$ means $N\oplus \cdots \oplus N$ $m$ times for any right or left $A$-module $N$. In other words there are split injections of the form $V\hookrightarrow pW$ and $W\hookrightarrow qV$ or equivalently isomorphisms $V\oplus * \cong pW$ and $W\oplus - \cong qV$.

Consider a ring extension $B\subseteq A$. The $n$-th fold tensor power of the regular representation $A_B$ is denoted as $A^{\otimes_B (n)} = A\otimes_B \cdots \otimes_B A$ $n$-times for $n> 0$. Set $A^{\otimes_B (0)} = B$. Notice that $A^{\otimes_B (n)}$ can attain any of the following bimodule structures: $_A{A^{\otimes_B (n) }}_A$, $_B{A^{\otimes_B (n)}}_A$, $_A{A^{\otimes_B (n)}}_B$ and $_B{A^{\otimes_B (n)}}_B$ for $n>0$, each obtained from the preceding by module restriction, and a $B$-$B$-bimodule structure for $n=0$. 

\begin{de}
Let $B\subseteq A$ a ring extension such that $A^{\otimes_B (n)} \sim A^{\otimes_B (n+1)}$ for some integer $n$ as an $X$-$Y$- bimodule with $X,Y \in \{A,B\}$. We say the extension has finite 
\begin{itemize}
\item \textbf{Odd depth}: $d_{odd}(B,A) = 2n + 1$ if $X=Y=B$.
\item \textbf{Even depth}: $d_{ev}(B,A) = 2n$ if $X=B$ and $Y=A$ or $X=A$ and $Y=B$.
\item \textbf{$H$-depth}: $d_h(B,A) = 2n-1$ if $X=Y=A$.
\end{itemize}
\end{de}

The reader will notice that the similarity is compatible with the induction and restriction functors in the tensor category $\mathcal{M}_A$, (symmetrically in $_A\mathcal{M}$), therefore $A^{\otimes_B (n)} \sim A^{\otimes_B (n+1)}$ implies $A^{\otimes_B (n)} \sim A^{\otimes_B (n+r)}$ for all  integers $r > 1$ and for all combinations of $X$-$Y$-bimodules. For this reason we are only interested in the minimum integer $n$ for which the similarity is satisfied, and in any case call it \textbf{minimum depth}. Unless there is a possibility of confusion we denote it $d(B,A)$ for odd and even depth \cite{BDK}, and $d_h(B,A)$ for $H$-depth \cite{Ka4}. In case there is no integer $n$ for which the similarity is satisfied we say the extension is of infinite depth. Furthermore notice that if one of the depths above is finite then all of them are, in fact 
\begin{equation}
\label{finite depth}
d_h(B,A) - 2 \leq d(B,A) \leq d_h(B,A) + 1 
\end{equation}

\begin{ex}
$B\subseteq A$ has minimum depth $1$ if and only if $_BA_B \sim _BB_B$ \cite[27]{BK1}. In this case one can show that thre is an algebra isomorphism $A \cong B\otimes_{Z(B)}A^B$ where $Z(B)$ and $A^B$ denote the center of $B$  and the centraliser of $B$ in $A$ respectively. 

Given a finite group ring extension $kH\subseteq kG$ depth $2$ is equivalent to normality of the group pair $H\vartriangleleft G$. A similar result is true for Hopf algebra extensions of finite index $R\subseteq H$ \cite{BK1}. Ring extensions of depth 2 are bialgebroid-Galois extensions (given a generator condition on $A_B$ is met). The case $H$ depth equals $1$ is the $H$-separability condition \cite{Ka4} since as $A$-$A$ bimodules the extension satisfies $A\otimes_B A | qA$ for some integer $q$. Hence $A$ is a separable extension of $B$. Hopf Galois extensions have minimum depth $\leq2$ as well \cite{Ka1}.
\end{ex}

\begin{ex}
Let $B\subseteq A$ be an extension of semisimple complex matrix algebras. Let $M:K_0(B)\longrightarrow K_0(A)$ be the $r\times s$ induction matrix where $r$ and $s$ are the number of irreducible representations of $B$ in $K_0(B)$ and of $A$ in $K_0(A)$ respectively.
Let $S$ be the order $r$ symmetric matrix defined by $S := MM^T$. In \cite{BKK} it is shown that the odd depth satisfies $d_{odd}(B,A) = 2n+1$ if $S^{(n)}$ and $S^{(n+1)}$ have an equal number of zero entries. The minimum even depth can be computed in the same manner by looking at the zero entries of the powers of $S^{(m)}M$. The overall minimum depth is given by the least $n \in \mathbb{N}$ such that
\begin{equation}
M^{(n+1)} \leq qM^{(n-1)}
\end{equation}
for some natural number $q$, each $(i,j)$-entry. $M^{(0)} := I_r$, $M^{(2n)} = S^{(n)}$, and $M^{(2n+1)} = S^{(n)}M$.
Let $N$ be the order $s$ symmetric matrix defined by $N:= M^TM$. The minimum $H$-depth of the extension $B\subseteq A$ is given by the least natural number $n$ such that the zero entries of $N^{(n)}$ stabilise \cite{Ka7}. From matrix definitions it follows that a subalgebra pair of semisimple algebras $B\subseteq A$ over a field of characteristic zero is always finite depth. In characteristic $p$ finite depth holds if either $B$ or $A$ is a separable algebra\cite[Corollary 2.2]{KY} .
\end{ex}

In \cite{BDK} the authors provide formulas for the depth of the extension of group algebras of the symmetric  (respectively the alternating) group on $n$ and $n+1$ letters over the complex numbers, $d(\mathbb{C}\mathcal{S}_n, \mathbb{C}\mathcal{S}_{n+1}) = 2n+1$, (respectively $d(\mathbb{C}\mathcal{A}_n,\mathbb{C}\mathcal{A}_n) \leq 2(n - \lceil \sqrt {n}\rceil) + 1$). Moreover the following string of inequalities relating the different depths of finite group algebra extensions over a variety of rings holds \cite{BDK}:

\begin{equation}
\label{string group depth}
d_0(H,G) \leq d_p(H,G) \leq d_R(H,G) \leq d_{\mathbb{Z}}(H,G) \leq d_c(H,G)
\end{equation}

In \cite{SD} a formula for the twisted complex extension of algebras over the symmetric groups of order $n$ and $n+1$ is given: $d(\mathbb{C}_\alpha \mathbb{S}_n, \mathbb{C}_\alpha \mathbb{S}_{n+1}) = 2(n - \lceil\frac{\sqrt{8n+1}-1}{2}\rceil)+1$, where $\alpha$ is a $2$-cocycle. In \cite{HKS} by exploiting the structure of corings and entwining structures between a coalgebra $C$ and an algebra $A$; $(A,C)_\psi$, as well as extensions of Galois-corings,  the authors provide a framework to explain this result via extensions of crossed product corings of the form $D\#_\sigma H \subseteq D\#_\sigma G$ where $H<G$ is a finite group extension, $D$ is a twisted module algebra and $\alpha$ a $2$-cocycle and extend the string of inequalities \eqref{string group depth} to

\begin{equation*}
d(D\#_\sigma H, D\#_\sigma G) \leq d_0(H,G) \leq d_p(H,G) \leq 
\end{equation*}
\begin{equation}
\label{string group depth 2}
d_R(H,G) \leq d_{\mathbb{Z}}(H,G) \leq d_c(H,G)
\end{equation}

Other interesting results in depth theory such as depth of Young subgroups of the symmetric group of order $n$, depth of subgroups of Suzuki groups, depth of the maximal subgroups of Ree groups, the depth of subgroups of $PSL(2,q)$, and the depth of the Taft algebra of order $n^2$ in its Drinfel\vtick d double, can be found in \cite{FKR},\cite{HHP},\cite{HHP2},\cite{F},\cite{HKL}, respectively. 

\section{Depth in finite tensor categories and the quotient module of a finite dimensional extension of Hopf algebras}
\label{FTC}

Let $H$ be a Hopf algebra with coproduct denoted $\Delta\ (h) = h_1\otimes h_2$ (where the summation over the index $(h)$ is understood) and antipode $S$. Let $R$ be a Hopf subalgebra of $H$, so $\Delta (R) \subseteq R\otimes R$ and $S(R) = R$. Since we are only considering finite dimensional right or left $H$-modules, it is important to note that the category $_H\mathcal{M}$ (respectively $\mathcal{M}_H$) is a finite tensor category, in particular is a \textit{Krull-Schmidt} category since it is monoidal and all objects are of finite length, hence satisfy the Krull-Schmidt decomposition theorem. In \cite{Ka2} it was shown that the tensor powers of $H$ over $R$  reduce to tensor powers of the quotient module $Q= H/R^+H$ of the extension $R\subseteq H$ .  This is $H^{\otimes_R (n+1)} \cong H\otimes Q^{\otimes (n)}$ given by:
\begin{equation}
\label{depth iso}
x^1\otimes_ R x^2 \otimes_R \cdots \otimes_R x^{n+1} \longmapsto x^1 x^2_1 \cdots x^{n+1}_1\otimes \overline{x^2_2\cdots x^{n+1}_2}\otimes \cdots \otimes \overline{x^{n+1}_n} 
\end{equation}

Given this setting is worth noting that depth of the extension $R\subseteq H$ is related to the module depth of $Q$ in the following sense. Let $W$ be a left $H$ module, denote its $n$-th truncated algebra $T_n(W) = W\oplus W^{\otimes (2)}\oplus \cdots W^{\otimes (n)}$ for $n\geq 1$ and $T_0(W) =$ $_Hk$:

\begin{de}
Let $W$ be a left (or right) module over a Hopf algebra $H$. We say $W$ has module depth $n > 1$ and denote it $d(W,_H\mathcal{M}) = n$ if  $T_n(W) \sim T_{n+1}(W)$ and depth $0$ if $W$ is a copy of direct sums of the trivial module $_Hk$.
\end{de}

As before, and for the same reasons, note that depth $n$ implies depth $n+r$ for $r\geq 1$. Again we are  only interested in minimum module depth when it occurs. If the module $W$ has finite module depth we say it is algebraic. \\

Recall that the representation ring or Green ring $A(H)$ of a Hopf algebra $H$ is generated by the set of isocalsses of indecomposable $H$-modules, with the sum and product of isoclasses given by the isoclasses of the direct sum and tensor product respectively. An algebraic object in the representation ring of a finite group algebra is an object satisfying a polynomial with integer coefficients. This notion appears for example in \cite{Fe} and can naturally be extended to that of an algebraic object in the Green ring $A(H)$ of a Hopf algebra $H$. The notions of finite depth in $_H\mathcal{M}$ and being an algebraic element in $A(H)$ are quivalent, a short proof of this can be found in \cite{H}.\\

Note that for all  $W \in$ $_H\mathcal{M}$ and for all $m>0$ one has $W^{\otimes (m)}|qW^{\otimes (m+1)}$ for some integer $q$. Hence for all $r\geq 1$ all indecomposable constituents of $W^{\otimes (m)}$ are contained in the set of indecomposable constituents of $W^{\otimes (m+r)}$. In particular $Ind(W^{\otimes(m)}) \subseteq Ind(W^{\otimes (m+1)})$. For this reason $d(W,_H\mathcal{M}) = n$ implies $W^{\otimes (n)}\sim W^{\otimes (n+1)}$. A somewhat converse of this is the following:

\begin{pr}
\label{module coalgebra depth}
Let $W$ be a left $H$-module coalgebra (a coalgebra in $_H\mathcal{M}$), then $W^{\otimes (n)} \sim W^{\otimes (n+1)}$ implies $d(W,_H\mathcal{M}) \leq n$.
\end{pr}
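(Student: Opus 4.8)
The plan is to upgrade the given similarity of single tensor powers to a similarity of truncated algebras by working at the level of sets of indecomposable constituents, using that ${}_H\mathcal{M}$ is Krull--Schmidt. The first observation is that, because $W$ is a coalgebra in ${}_H\mathcal{M}$, its comultiplication $\Delta_W : W \to W^{\otimes(2)}$ and counit $\epsilon_W : W \to {}_H k$ are morphisms of $H$-modules, and the counit axiom $(\epsilon_W \otimes \mathrm{id}_W)\circ\Delta_W = \mathrm{id}_W$ exhibits $\Delta_W$ as a split monomorphism of $H$-modules. Hence $W \,|\, W^{\otimes(2)}$, and tensoring this split injection on the left by $W^{\otimes(m-1)}$ gives $W^{\otimes(m)} \,|\, W^{\otimes(m+1)}$ for every $m\geq 1$; consequently $Ind(W^{\otimes(m)}) \subseteq Ind(W^{\otimes(m+1)})$ for all $m\geq 1$. (This is the one place the module coalgebra hypothesis is really used: for an arbitrary module the forward inclusion can fail.)

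Next I would feed in the hypothesis. By definition $W^{\otimes(n)} \sim W^{\otimes(n+1)}$ means each divides a finite multiple of the other, and in a Krull--Schmidt category this is equivalent to $Ind(W^{\otimes(n)}) = Ind(W^{\otimes(n+1)})$. Combined with the ascending chain above, this forces $Ind(W^{\otimes(m)}) = Ind(W^{\otimes(n)})$ for all $m\geq n$, and in particular $Ind(W^{\otimes(n+1)}) \subseteq Ind(W^{\otimes(n)}) \subseteq Ind(T_n(W))$, where $Ind(T_n(W)) = \bigcup_{i=1}^{n} Ind(W^{\otimes(i)})$. Now $T_{n+1}(W) = T_n(W)\oplus W^{\otimes(n+1)}$, so on the one hand $T_n(W)\,|\,T_{n+1}(W)$ via the obvious split injection, and on the other hand $Ind(T_{n+1}(W)) = Ind(T_n(W)) \cup Ind(W^{\otimes(n+1)}) = Ind(T_n(W))$. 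Thus $T_n(W)$ and $T_{n+1}(W)$ have the same indecomposable constituents; writing each as a direct sum of indecomposables with positive multiplicities and bounding the ratios of those multiplicities yields $T_{n+1}(W) \,|\, q\,T_n(W)$ for a suitable integer $q$. Hence $T_n(W) \sim T_{n+1}(W)$, i.e. $d(W,{}_H\mathcal{M}) \leq n$, with the small cases $n\leq 1$ absorbed into the depth-$0$ convention or handled directly.

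The genuinely delicate points are the two appeals to Krull--Schmidt — that similarity is the same as equality of constituent sets, and that an inclusion of constituent sets produces divisibility by some multiple — and the initial reduction that isolates $W^{\otimes(n+1)}$ as the only new summand of $T_{n+1}(W)$ relative to $T_n(W)$. Everything else is bookkeeping with direct sum decompositions, and the coalgebra structure enters only through the splitting of $\Delta_W$.
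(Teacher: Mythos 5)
Your argument is correct and takes essentially the same route as the paper's own (very terse) proof: the counit splits the coproduct in ${}_H\mathcal{M}$, giving $W^{\otimes (m)} \mid W^{\otimes (m+1)}$ for all $m\geq 1$, and then Krull--Schmidt bookkeeping with indecomposable constituents converts the hypothesis $W^{\otimes (n)}\sim W^{\otimes (n+1)}$ into $T_n(W)\sim T_{n+1}(W)$. The only blemish is the aside that the chain of constituent sets stabilizes for all $m\geq n$ --- that would require tensoring the similarity by $W^{\otimes (r)}$ rather than just combining the chain with one equality --- but you only use the case $m=n+1$, which is the hypothesis itself, so the proof is unaffected.
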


\begin{proof}
Just recall that as a coalgebra map in $_H\mathcal{M}$ the coproduct $\Delta$ splits via the counit $\varepsilon$. Then $Ind(W^{\otimes (n)}) \subseteq Ind(T_n(W)$. Hence the result.
\end{proof}

Notice that given an extension of Hopf algebras $R\subseteq H$ their quotient module $Q=H/R^+H$ is a left $H$-module coalgebra via $g\cdot\Delta(\overline{h}) = \overline{g_1h_1} \otimes \overline{g_2h_2}$. Then we have the following:

\begin{pr}\cite{Ka2}
\label{hdepth}
Let $R\subseteq H$ be a Hopf algebra extension, then 
\begin{equation}
d_h(R,H) = 2d(Q,_H\mathcal{M})+1
\end{equation}
\end{pr}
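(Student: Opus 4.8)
The plan is to push the $H$-$H$-bimodule similarity that defines $d_h(R,H)$ across the isomorphism \eqref{depth iso}, rewrite it as a one-sided similarity of tensor powers of $Q$, and close the circle with Proposition~\ref{module coalgebra depth}. First I would pin down the bimodule structure carried by the right-hand side of \eqref{depth iso}. For $m\ge 0$ put $M_m:=H^{\otimes_R(m+1)}$, so that \eqref{depth iso} gives a linear isomorphism $M_m\cong H\otimes Q^{\otimes(m)}$, and I would check that it is in fact an isomorphism of $H$-$H$-bimodules once $H\otimes Q^{\otimes(m)}$ is given the left action $g\cdot(x\otimes\mathbf{q})=gx\otimes\mathbf{q}$ and the right action
\begin{equation*}
(x\otimes\mathbf{q})\cdot h \;=\; xh_1\otimes \mathbf{q}\triangleleft h_2 ,
\end{equation*}
where $\triangleleft$ is the diagonal right $H$-action on $Q^{\otimes(m)}$ coming from the module coalgebra $Q=H/R^+H$. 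This is exactly what substituting $x^{m+1}\mapsto x^{m+1}h$ into \eqref{depth iso} and expanding with the iterated coproduct produces; I would state it and leave the index bookkeeping to the reader. In particular $M_m$ is free as a left $H$-module, so applying the additive functor $k\otimes_H(-)$ (with $k$ the trivial left $H$-module) gives $k\otimes_H M_m\cong Q^{\otimes(m)}$ as right $H$-modules, the factor $h_1$ on the $H$-slot being absorbed by the counit; conversely $H\otimes(-)$ sends a right $H$-module $V$ to a bimodule $H\otimes V$ of this same shape with $k\otimes_H(H\otimes V)\cong V$.

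Next I would observe that an additive functor preserves finite direct sums and split monomorphisms, hence both $k\otimes_H(-)$ and $H\otimes(-)$ preserve the similarity relation. Feeding the two isomorphisms above through them yields, for every $n\ge 1$,
\begin{equation*}
H^{\otimes_R(n)}\sim H^{\otimes_R(n+1)}\ \text{as $H$-$H$-bimodules}\quad\Longleftrightarrow\quad Q^{\otimes(n-1)}\sim Q^{\otimes(n)}\ \text{as $H$-modules}.
\end{equation*}
Here the right-hand condition may be read in ${}_H\mathcal{M}$ instead of $\mathcal{M}_H$: pullback along the bijective antipode is an equivalence that preserves finite tensor powers of a fixed module, the similarity relation, and the trivial module, and it interchanges the right and the left module coalgebra structures on $Q$, so nothing relevant changes.

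It then remains to match the right-hand condition with module depth. If $d(Q,{}_H\mathcal{M})\le m$ then $Q^{\otimes(m)}\sim Q^{\otimes(m+1)}$, as recorded just before the statement; conversely $Q$ is a module coalgebra, so Proposition~\ref{module coalgebra depth} turns $Q^{\otimes(m)}\sim Q^{\otimes(m+1)}$ back into $d(Q,{}_H\mathcal{M})\le m$. Hence, for all $m\ge 0$,
\begin{equation*}
d(Q,{}_H\mathcal{M})\le m \;\iff\; Q^{\otimes(m)}\sim Q^{\otimes(m+1)} \;\iff\; H^{\otimes_R(m+1)}\sim H^{\otimes_R(m+2)}\ \text{as $H$-$H$-bimodules}.
\end{equation*}
Since $A^{\otimes_B(0)}=B$ carries no $A$-$A$-bimodule structure, $H$-depth is always of the form $2n-1$ with $n\ge 1$, and bimodule similarity at a given level propagates upward, so the rightmost condition is equivalent to $d_h(R,H)\le 2m+1$. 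Taking $m=d(Q,{}_H\mathcal{M})$ gives $d_h(R,H)\le 2d(Q,{}_H\mathcal{M})+1$, and taking $m$ with $2m+1=d_h(R,H)$ gives the reverse inequality; the same chain shows one side is infinite exactly when the other is, which is the asserted identity.

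The step I expect to be the real obstacle is the first one: checking carefully that \eqref{depth iso} respects the diagonal right $H$-action above, so that it is a genuine $H$-$H$-bimodule isomorphism rather than merely a left-module or linear one, and that $k\otimes_H(-)$ is an honest inverse to $H\otimes(-)$ on bimodules of this particular shape. Once that is secured, everything downstream is formal manipulation of the similarity relation together with Proposition~\ref{module coalgebra depth}.
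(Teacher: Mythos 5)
Your argument is correct and takes essentially the same route as the paper's own proof: transport the $H$-$H$-bimodule similarity through the isomorphism \eqref{depth iso}, strip the free left $H$-factor with $k\otimes_H(-)$, invoke Proposition \ref{module coalgebra depth} for one inequality, and obtain the reverse one by tensoring back with $H$. Your additional care with the diagonal right action on $H\otimes Q^{\otimes (m)}$ and the left/right switch via the antipode just makes explicit what the paper compresses into ``the other inequality follows from the definitions.''
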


\begin{proof}
This is a consequence of applying equation \eqref{depth iso} on both sides of $H^{\otimes_R (n)}\sim H^{\otimes_R (n+1)}$ to obtain $H\otimes Q^{\otimes (n)} \sim H\otimes Q^{\otimes (n+1)}$ and then applying $k\otimes_H -$ on the left on both sides of the similarity  to obtain $Q^{\otimes (n)} \sim Q^{\otimes (n+1)}$. Then apply proposition \eqref{module coalgebra depth} to obtain $d_h(R,H) \geq 2d(Q,_H\mathcal{M})+1$. The other inequality follows from the definitions. 
\end{proof}

\begin{ex}
Let $R\subseteq H$, following \cite{HKS} one verifies that $H\otimes Q$ is a Galois coring, a coalgebra in $_H\mathcal{M}_H$ such that $H\otimes_R H \cong H\otimes Q$ via $a\otimes b \longmapsto agb$ where $g$ is the grouplike element $1\otimes\overline{1}$ in $H\otimes Q$ and multiplication is given via an entwining $\psi: Q\otimes H \longrightarrow H\otimes Q$. \cite[Proposition 4.2]{HKS} relates the module depth of $H\otimes Q$ as an $H$-$H$-bimodule and the module depth of $Q$ as an $H$ module via
\begin{equation}
d(H\otimes Q, _H\mathcal{M}_H) = 2d(Q,\mathcal{M}_H) +1
\end{equation}
\end{ex}

\section{Factorisation algebras}

\subsection{Factorisation algebras and depth}
\label{Factorisation Algebras}

\begin{de}
Let $A$ and $B$ be to finite dimensional algebras, $m_A$ and $m_B$ their respective multiplications and let: 
\begin{equation}
\psi : B\otimes A \longrightarrow A\otimes B \quad ; \quad b\otimes a \longmapsto a_{\alpha}\otimes b^{\alpha}  
\end{equation}
such that 
\begin{equation}
\psi (1_B\otimes a) = a\otimes 1_B,\quad \psi (b\otimes 1_A) = 1_A\otimes b
\end{equation}
for all $a \in A$ and $b\in B$. Moreover suppose $\psi$ satisfies the following octagon:
\begin{equation*}
(A\otimes m_B)\circ (\psi \otimes B)\circ (B\otimes m_A \otimes B)\circ (B\otimes A \otimes \psi) =
\end{equation*}
\begin{equation*}
(m_a\otimes B)\circ (A\otimes \psi)\circ (A\otimes m_B \otimes A) \circ (\psi \otimes B \otimes A)
\end{equation*}
this means that for all $a,d \in A$, and all $b,c \in B$
\begin {equation}
\label{factorisation associativity}
(ad_\alpha)_\beta \otimes b^\beta c^\alpha = a_\beta d_\alpha \otimes (b^\beta c)^\alpha
\end{equation}
We call $\psi$ a factorisation of $A$ and $B$ and $A\otimes_\psi B$ a factorisation algebra of $A$ and $B$. It is a  unital associative algebra with product
\begin{equation}
(a\otimes b) (c\otimes d) = a\psi (b\otimes c)d = ac_\alpha \otimes b^\alpha d
\end{equation}
where $a,c \in A$, $b,d \in B$ and the unit element is $1_A \otimes 1_B$.  Besides $A$ and $B$ are $A\otimes_\psi B$ subalgebras via the inclusion. 
\end{de}

\begin{ex}
Of course setting $\psi(b\otimes a) = a\otimes b$ yields the algebra $A\otimes B$. On a more sophisticated manner, let $H$ be a Hopf algebra and $A$ a left $H$-module algebra. Suppose $H$ measures $A$: $h\cdot (ab) = (h_{1}\cdot a )(h_2 \cdot b)$, $h\cdot 1_A = \varepsilon(h) 1_A$ for all $h \in H$ and $a,b \in A$. define $\psi$ in the following way: 
\begin{equation*}
\psi: H\otimes A; \quad h\otimes a\longmapsto h_1\cdot a \otimes h_2
\end{equation*}
then the product becomes 
\begin{equation*}
(a\otimes h)(b\otimes g) = a\psi(h\otimes b)g = a(h_1 \cdot b \otimes h_2)g = ah_1\cdot b\otimes h_2 g 
\end{equation*}
It is a routine exercise to verify that $A\otimes_\psi H$ is a factorisation algebra. It is also immediate that such factorisation algebra is the smash product of $A$ and $H$, $A\# H$.
\end{ex}

Now let $A\otimes_\psi B$ be a factorisation algebra via $\psi: B\otimes A \longmapsto A\otimes B$. For the sake of brevity we will denote it $S_\psi = A\otimes_\psi B$. We point out that due to multiplication in $S_\psi$ and the fact that both $A$ and $B$ are subalgebras of $S_\psi$ we get that the $n$-th tensor power of the regular representation of the extension $B\subseteq S_\psi$ is an $S_\psi$-bimodule via:
\begin{equation*}
(a\otimes_\psi b)(a_1 \otimes b_1 \otimes_B \cdots \otimes_B a_n\otimes b_n)(c \otimes_\psi d) =
\end{equation*}  
\begin{equation}
\label{psi linearity}
= aa_{1\alpha} \otimes b^\alpha b_1\otimes_B \cdots \otimes_B a_nc_\alpha\otimes b_n^\alpha d
\end{equation}

\begin{thm}
\label{prop 1}
Let $A$ and $B$ be algebras, $\psi: B\otimes A \longmapsto A\otimes B$ a factorisation and $S_\psi$ the corresponding factorisation algebra. Then
\begin{equation}
S_\psi ^{\otimes_B (n)} \cong A^{\otimes(n)} \otimes B
\end{equation}
as $S_\psi$-bimodules via
\begin{equation*}
\theta_n (a_1\otimes b_1 \otimes_B \cdots \otimes_B a_n\otimes b_n) =
\end{equation*}
\begin{equation} 
a_1\otimes a_{2_{\alpha 1}}\otimes \cdots \otimes a_{n_{\alpha (n-1)}}\otimes b_1^{\alpha 1}b_2^{\alpha 2}\cdots b_n
\end{equation}
with inverse 
\begin{equation}
\theta_n ^{-1} (a_1\otimes \cdots \otimes a_n\otimes b) = a_1\otimes 1_B \otimes_B \cdots \otimes_B a_{n-1}\otimes 1_B \otimes_B a_n \otimes b 
\end{equation}
\end{thm}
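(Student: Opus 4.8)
The plan is to prove the isomorphism by a direct verification that the two maps $\theta_n$ and $\theta_n^{-1}$ are mutually inverse, well-defined, and $S_\psi$-bilinear. First I would check that $\theta_n^{-1}$ is well-defined on the relative tensor product $S_\psi^{\otimes_B(n)}$, i.e. that it respects the balancing relations $(a_i\otimes b_i)(c\otimes d)\otimes_B(a_{i+1}\otimes b_{i+1}) = (a_i\otimes b_i)\otimes_B(c\otimes d)(a_{i+1}\otimes b_{i+1})$ for $c\otimes d\in B$ (that is $c=1_A$). Here the relation $\psi(b\otimes 1_A)=1_A\otimes b$ makes this transparent: moving a $B$-element across the tensor sign only produces $b^\alpha$-type terms from $\psi(d\otimes a_{i+1})$, and since $\theta_n^{-1}$ inserts $1_B$ into every slot except the last, the compatibility is immediate. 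Dually, one must check $\theta_n$ itself is well-defined on $S_\psi^{\otimes_B(n)}$; this is the place where the octagon identity \eqref{factorisation associativity} enters, because $\theta_n$ collapses all the $A$-factors and all the $B$-factors, and pushing a $B$-element $b$ from the $i$-th slot to the right past the $A$-factors $a_{i+1},\dots,a_n$ requires repeated application of $\psi$, whose coherence is exactly the octagon.

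Next I would verify $\theta_n\circ\theta_n^{-1}=\mathrm{id}$ on $A^{\otimes(n)}\otimes B$: feeding $a_1\otimes 1_B\otimes_B\cdots\otimes_B a_n\otimes b$ into $\theta_n$, every $\psi(1_B\otimes a_i)=a_i\otimes 1_B$, so all the twisting is trivial and one recovers $a_1\otimes\cdots\otimes a_n\otimes b$. The reverse composite $\theta_n^{-1}\circ\theta_n=\mathrm{id}$ on $S_\psi^{\otimes_B(n)}$ is the more delicate bookkeeping step: one applies $\theta_n$, then $\theta_n^{-1}$, and must reconstruct the original element modulo the $\otimes_B$ relations. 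The key point is that each $b_i^{\alpha i}$ that got shuffled to the right in $\theta_n$ can be pulled back through the tensor signs — legitimately, because the tensor is over $B$ — to its original slot, and the accompanying subscript shifts on the $a_j$'s (the $a_{j_{\alpha(j-1)}}$ notation) are undone by the same moves. Inductively one peels off the last tensorand $a_n\otimes b_n$ and reduces to the $(n-1)$-fold case.

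Finally I would check $S_\psi$-bilinearity. Right $S_\psi$-linearity is easy: multiplication on the right by $c\otimes d$ only affects the last factor $a_n\otimes b_n$, turning it into $a_nc_\alpha\otimes b_n^\alpha d$, which under $\theta_n$ lands on $\cdots\otimes a_{n_{\alpha(n-1)}}c_{?}\otimes\cdots b_n^{?}d$ matching the right action on $A^{\otimes(n)}\otimes B$ as described by \eqref{psi linearity}. Left $S_\psi$-linearity by $a\otimes b$ affects $a_1\otimes b_1$, replacing it by $aa_{1\alpha}\otimes b^\alpha b_1$, and one must track how the extra $b^\alpha$ propagates through $\theta_n$; this again uses the octagon to re-associate the $\psi$-applications.

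The main obstacle I anticipate is not any single step but the notational overhead of the iterated factorisation: the symbol $a_{j_{\alpha(j-1)}}$ hides $j-1$ nested applications of $\psi$, and making the octagon identity \eqref{factorisation associativity} do its job cleanly across all slots requires an induction on $n$ with a carefully chosen inductive hypothesis (for instance, proving a one-step ``move a single $B$-element past a single $A$-element'' lemma and then composing). I would structure the proof by first establishing $n=1,2$ by hand to fix the pattern, then running the induction, so that the octagon is invoked exactly once per induction step rather than in an unwieldy global computation.
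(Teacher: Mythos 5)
Your proposal is correct and follows essentially the same route as the paper: both construct the isomorphism by iterated application of the multiplication in $S_\psi$ and the octagon identity \eqref{factorisation associativity} to shuffle the $B$-factors to the right, verify $\theta_n^{-1}\circ\theta_n=\mathrm{id}$ by pulling the accumulated $B$-element back across the $\otimes_B$ signs, and deduce $S_\psi$-bilinearity from \eqref{psi linearity} together with \eqref{factorisation associativity}. (One small slip: the balancing check belongs only to $\theta_n$, whose domain is the relative tensor product $S_\psi^{\otimes_B(n)}$; $\theta_n^{-1}$ is defined on the plain tensor product $A^{\otimes(n)}\otimes B$ and needs no such check --- but since you carry out the correct check for $\theta_n$ anyway, this does not affect the argument.)
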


\begin{proof}

First of all $A$-$B$ linearity is given by multiplication either in $A$ or $B$. Then notice that using multiplication in $S_\psi$ and equation \eqref{factorisation associativity} one gets $\theta_n$ in the following way:

\begin{equation*}
a_1 \otimes b_1 \otimes_B a_2 \otimes b_2 \otimes _B a_3 \otimes b_3 \otimes_B \cdots \otimes_B a_n \otimes b_n \longmapsto \end{equation*}
\begin{equation*}
a_1 \otimes (1_A \otimes b_1)(a_2 \otimes b_2) \otimes _B a_3 \otimes b_3 \otimes_B \cdots \otimes_B a_n \otimes b_n =
\end{equation*}
\begin{equation*}
a_1 \otimes  a_{2(\alpha 1)} \otimes b_1^{\alpha 1}b_2 \otimes _B a_3 \otimes b_3 \otimes_B \cdots \otimes_B a_n \otimes b_n \longmapsto 
\end{equation*}
\begin{equation*}
 a_1 \otimes  a_{2(\alpha 1)} \otimes (1_A \otimes b_1^{\alpha 1}b_2) (a_3 \otimes b_3) \otimes_B \cdots \otimes_B a_n \otimes b_n = 
\end{equation*}
\begin{equation*}
a_1 \otimes  a_{2(\alpha 1)} \otimes a_{3(\alpha 2)} \otimes (b_1^{\alpha 1}b_2)^{\alpha 2}b_3 \otimes_B \cdots \otimes_B a_n \otimes b_n =
\end{equation*}
\begin{equation*}
a_1 \otimes  a_{2(\alpha 1)} \otimes a_{3(\alpha 2)} \otimes b_1^{\alpha 1}b_2^{\alpha 2}b_3 \otimes_B \cdots \otimes_B a_n \otimes b_n 
\end{equation*}
Repeat this process to the right $n-1$ times to get 
\begin{equation*}
a_1\otimes a_{2(\alpha 1)} \otimes \cdots a_{n(\alpha n-1)} \otimes b_1^{\alpha 1}b_2^{\alpha 2}\cdots b_n
\end{equation*}

Now it is easy to check that:

\begin{equation*}
\theta_n^{-1}\circ\theta_n (a_1\otimes b_1 \otimes_B \cdots \otimes_B a_n \otimes b_n) = a_1\otimes b_1 \otimes_B \cdots \otimes_B a_n \otimes b_n
\end{equation*}
Indeed, notice that by multiplication and equation \eqref{factorisation associativity}  for the last tensor powers in $\theta_n (a_1\otimes b_1 \otimes_B \cdots \otimes_B a_n \otimes b_n)$ one has:
\begin{equation*}
\cdots 1_B \otimes_B a_{n(\alpha (n-1))}\otimes b_1^{\alpha 1}b_2^{\alpha 2} \cdots b_n  =  \cdots 1_B \otimes_B (1_A \otimes b_1^{\alpha 1}\cdots b_{n-1})(a_{n}\otimes b_n) 
\end{equation*}
but $ 1_A \otimes b_1^{\alpha 1}\cdots b_{n-1} \in B$ and moves along to the left over the $B$-tensor, so we get $= \cdots \otimes b_1^{\alpha 1}\cdots b_{n-1}\otimes (a_{n}\otimes b_n)$ Repeat this process to the left $n-1$ times and we get what we want, that is $\theta_n^{-1}\circ \theta_n = id_{S_\psi ^{\otimes_B (n)}}$. Checking that $\theta\circ\theta^{-1} = Id_{A\otimes B^{\otimes n}}$ is straightforward. 

Notice that the $S_\psi$-linearity of $\theta$ as well as of $\theta^{-1}$ is given by equation \eqref{psi linearity} together with equation \eqref{factorisation associativity}. 
\end{proof}

Since $A$ and $B$ are factor algebras in $S_\psi$ one identifies them with $A\otimes_\psi 1_B$ and $1_A\otimes_\psi B$ respectively, hence one can think of their regular representations as objects in $_{S_\psi}\mathcal{M}_{S_\psi}$.

\begin{thm}
\label{thm45}
Let $A\otimes_\psi B$ be a factorisation algebra with $_{S_\psi}\mathcal{M}$ a Krull-Schmidt category. Then
\begin{equation}
\label{factorisation h depth}
d_h(B,S_\psi) \leq 2d(A, _{S_\psi}\mathcal{M}_{S_\psi}) + 1
\end{equation}
\end{thm}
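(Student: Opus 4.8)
The plan is to transfer the similarity underlying module depth of $A$ in $_{S_\psi}\mathcal{M}_{S_\psi}$ to the tensor powers $S_\psi^{\otimes_B(n)}$ via the bimodule isomorphism $\theta_n$ of Theorem~\eqref{prop 1}, and then read off $H$-depth. Recall that by definition $d(A, {}_{S_\psi}\mathcal{M}_{S_\psi}) = m$ means the truncated algebra $T_m(A) = A \oplus A^{\otimes(2)} \oplus \cdots \oplus A^{\otimes(m)}$ satisfies $T_m(A) \sim T_{m+1}(A)$ as $S_\psi$-$S_\psi$-bimodules, where each $A^{\otimes(i)}$ carries its $S_\psi$-bimodule structure coming from $A$ being a factor subalgebra. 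Theorem~\eqref{prop 1} gives $S_\psi^{\otimes_B(n)} \cong A^{\otimes(n)} \otimes B$ as $S_\psi$-bimodules, where the right-hand side should be understood as $A^{\otimes(n)}$ with its bimodule structure "spread out" through $\theta_n$; in particular $S_\psi^{\otimes_B(1)} = S_\psi \cong A \otimes B$. The first step is therefore to pin down the precise bimodule structure on $A^{\otimes(n)} \otimes B$ and observe that tensoring with $B$ on the right (i.e. the functor $- \otimes B$, or better $- \otimes_k B$ suitably interpreted as $\mathrm{Ind}$ from bimodules over $A$'s copy up to $S_\psi$) is additive and preserves similarity.

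Second, I would show that if $X \sim Y$ as $S_\psi$-bimodules then $X \otimes B \sim Y \otimes B$ as $S_\psi$-bimodules, which is immediate since $-\otimes B$ is an additive functor and similarity is defined by split injections into finite direct sums, both of which are preserved by additive functors. Applying this to $T_m(A) \sim T_{m+1}(A)$ gives $T_m(A)\otimes B \sim T_{m+1}(A)\otimes B$, i.e. $\bigoplus_{i=1}^{m} (A^{\otimes(i)}\otimes B) \sim \bigoplus_{i=1}^{m+1}(A^{\otimes(i)}\otimes B)$. By Theorem~\eqref{prop 1}, $A^{\otimes(i)}\otimes B \cong S_\psi^{\otimes_B(i)}$, so this reads $\bigoplus_{i=1}^{m} S_\psi^{\otimes_B(i)} \sim \bigoplus_{i=1}^{m+1} S_\psi^{\otimes_B(i)}$ as $S_\psi$-bimodules.

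Third, I would extract from this a similarity between $S_\psi^{\otimes_B(m+1)}$ and $S_\psi^{\otimes_B(m+2)}$. Here one uses the standard fact, already invoked for module depth in the Krull--Schmidt setting, that $S_\psi^{\otimes_B(i)} \mid q\, S_\psi^{\otimes_B(i+1)}$ for each $i$ and some $q$ — because $B \subseteq S_\psi$ and the multiplication/unit maps provide a split of $S_\psi^{\otimes_B(i)}$ off $S_\psi^{\otimes_B(i+1)}$ as bimodules — so that $\mathrm{Ind}(S_\psi^{\otimes_B(i)}) \subseteq \mathrm{Ind}(S_\psi^{\otimes_B(i+1)})$ (indecomposable constituents). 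Consequently the direct sum $\bigoplus_{i=1}^{m+1} S_\psi^{\otimes_B(i)}$ has the same set of indecomposable constituents as $S_\psi^{\otimes_B(m+1)}$, and similarly $\bigoplus_{i=1}^{m+2}$ matches $S_\psi^{\otimes_B(m+2)}$; combining with the similarity from the previous step yields $S_\psi^{\otimes_B(m+1)} \sim S_\psi^{\otimes_B(m+2)}$ as $S_\psi$-bimodules, which is exactly the condition $d_h(B, S_\psi) \leq 2(m+1) - 1 = 2m+1 = 2d(A, {}_{S_\psi}\mathcal{M}_{S_\psi}) + 1$.

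The main obstacle I anticipate is bookkeeping around the bimodule structures: one must check that the identification $S_\psi^{\otimes_B(i)} \cong A^{\otimes(i)}\otimes B$ is compatible with the inductions/restrictions used, i.e. that ``$-\otimes B$'' applied to the $S_\psi$-bimodule $A^{\otimes(i)}$ really does land on $S_\psi^{\otimes_B(i)}$ and not merely on something Morita-similar, and that the splitting $S_\psi^{\otimes_B(i)} \mid q\,S_\psi^{\otimes_B(i+1)}$ is genuinely $S_\psi$-$S_\psi$-bilinear (it is, using $\theta$ and $\theta^{-1}$ together with~\eqref{psi linearity} and~\eqref{factorisation associativity}, just as in the proof of Theorem~\eqref{prop 1}). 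Once these compatibilities are nailed down the argument is a formal transport of similarities, parallel to Proposition~\eqref{module coalgebra depth} and Proposition~\eqref{hdepth}.
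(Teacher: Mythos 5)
Your proposal is correct and takes essentially the same route as the paper: identify $S_\psi^{\otimes_B(n)}\cong A^{\otimes(n)}\otimes B$ via Theorem~\eqref{prop 1}, tensor the similarity coming from $d(A,{}_{S_\psi}\mathcal{M}_{S_\psi})=m$ with $B$, and use the Krull--Schmidt property together with the face/degeneracy splittings (nesting of indecomposable constituents) to pass from truncated tensor algebras to consecutive tensor powers, giving the $H$-depth bound. The only differences are cosmetic: the paper extracts $A^{\otimes(m)}\sim A^{\otimes(m+1)}$ before tensoring with $B$ rather than after, and your final extraction has an index shift (the similarity you actually derived, $\bigoplus_{i\leq m}S_\psi^{\otimes_B(i)}\sim\bigoplus_{i\leq m+1}S_\psi^{\otimes_B(i)}$, already yields $S_\psi^{\otimes_B(m)}\sim S_\psi^{\otimes_B(m+1)}$, which gives the stated inequality, indeed a slightly stronger one).
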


\begin{proof}
Let $d(A,_{S_\psi}\mathcal{M}_{S_\psi}) = n$. Since $_{S_\psi}\mathcal{M}_{S_\psi}$ is a Krull-Schmidt category, standard face and degeneracy functors imply $A^{\otimes_B (m)} | A^{\otimes_B (m+1)}$ for $m\geq 0$. Then $T_n(A) \sim T_{n+1}(A)$ implies $A^{\otimes (n+1)} \sim A^{\otimes (n)}$. Tensoring on the right by $(- \otimes B)$ one gets $A^{\otimes (n+1)}\otimes B \sim A^{\otimes (n)}\otimes B$. By Theorem \eqref{prop 1} this is equivalent to $(A\otimes_\psi B)^{\otimes_B (n+1)} \sim (A\otimes_\psi B)^{\otimes_B (n)}$. This by definition is $d_h(B,S_\psi) \leq 2n + 1$.
\end{proof}

\begin{co}
\label{h depth ineq}
Assume $A \in _B\mathcal{M}_B$. The odd and even depth of $B\subseteq S_\psi$ satisfy the following:
\begin{equation}
2d(A,_{S_\psi}\mathcal{M}_{S_\psi})+2 \leq d(B,S_\psi) \leq 2d(A,_{S_\psi}\mathcal{M}_{S_\psi})+3
\end{equation}
\end{co}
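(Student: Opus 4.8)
The plan is to sandwich the ordinary depth of $B\subseteq S_\psi$ between its $H$-depth and itself through the universal comparison \eqref{finite depth}, after controlling $d_h(B,S_\psi)$. Theorem \eqref{thm45} already gives $d_h(B,S_\psi)\le 2d(A,{}_{S_\psi}\mathcal{M}_{S_\psi})+1$, and combined with \eqref{finite depth} this yields at once $d(B,S_\psi)\le d_h(B,S_\psi)+1\le 2d(A,{}_{S_\psi}\mathcal{M}_{S_\psi})+2$, hence a fortiori the asserted upper bound $d(B,S_\psi)\le 2d(A,{}_{S_\psi}\mathcal{M}_{S_\psi})+3$; no input beyond Theorem \eqref{thm45} is needed for this half. (Equivalently one may restrict the $A$-$A$-bimodule similarity that realizes the $H$-depth to a $B$-$B$-bimodule similarity, which raises the depth value by two, giving $d(B,S_\psi)\le d_{odd}(B,S_\psi)\le d_h(B,S_\psi)+2$.)

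The content of the corollary is the lower bound, and this is where the hypothesis $A\in{}_B\mathcal{M}_B$ enters. It suffices to exclude, for every $k\le d(A,{}_{S_\psi}\mathcal{M}_{S_\psi})$, both an even-depth and an odd-depth similarity at level $k$; since the $B$-$B$-bimodule similarity is the coarsest of the structures involved, it is enough to prove $S_\psi^{\otimes_B(k)}\not\sim S_\psi^{\otimes_B(k+1)}$ as $B$-$B$-bimodules for all such $k$. Arguing by contradiction, I would push such a similarity through the isomorphism $\theta_k$ of Theorem \eqref{prop 1} to $A^{\otimes(k)}\otimes B\sim A^{\otimes(k+1)}\otimes B$ as $B$-$B$-bimodules, and then use the $B$-bimodule structure on $A$ to split off the trailing $B$-tensorand in a way compatible with both one-sided actions, obtaining $A^{\otimes(k)}\sim A^{\otimes(k+1)}$ in the tensor category in which the module depth of $A$ is measured. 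By the truncated-algebra criterion used in the paragraph preceding Proposition \eqref{module coalgebra depth}, this forces $T_k(A)\sim T_{k+1}(A)$, i.e. $d(A,{}_{S_\psi}\mathcal{M}_{S_\psi})\le k$; minimality of the module depth then gives $k=d(A,{}_{S_\psi}\mathcal{M}_{S_\psi})$, and this last case is ruled out because, unlike a module (co)algebra over a Hopf algebra, the object at hand does not have the monoidal unit among the indecomposable constituents of its tensor powers, so $A^{\otimes(d)}\not\sim A^{\otimes(d+1)}$ for $d=d(A,{}_{S_\psi}\mathcal{M}_{S_\psi})$. Consequently no $B$-$B$-bimodule similarity occurs below level $d(A,{}_{S_\psi}\mathcal{M}_{S_\psi})+1$, whence $d_{ev}(B,S_\psi)\ge 2d(A,{}_{S_\psi}\mathcal{M}_{S_\psi})+2$ and $d_{odd}(B,S_\psi)\ge 2d(A,{}_{S_\psi}\mathcal{M}_{S_\psi})+3$, so that $d(B,S_\psi)\ge 2d(A,{}_{S_\psi}\mathcal{M}_{S_\psi})+2$.

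The one genuinely delicate step is the descent ``$A^{\otimes(k)}\otimes B\sim A^{\otimes(k+1)}\otimes B$ as $B$-$B$-bimodules $\Rightarrow A^{\otimes(k)}\sim A^{\otimes(k+1)}$''. Since $A$ is not a $B$-sub-bimodule of $S_\psi$, this is not automatic: one must combine $A\in{}_B\mathcal{M}_B$ with the explicit formulas for $\theta_k$ and $\theta_k^{-1}$ and with Krull--Schmidt uniqueness to lift the split injections and surjections between the bimodules $A^{\otimes(\bullet)}\otimes B$ to split maps between the $A^{\otimes(\bullet)}$. I expect this verification, together with keeping careful track of which of the $B$-$B$-, $B$-$A$- and $A$-$A$-structures each of the odd, even and $H$-depth invariants detects, to be where essentially all the effort lies; the remaining steps are the routine arithmetic of \eqref{finite depth} and of the standard inequalities among the three depths.
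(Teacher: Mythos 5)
Your upper-bound half is correct and is essentially the paper's argument: the paper's entire proof consists of taking the $S_\psi$-bimodule similarity $S_\psi^{\otimes_B(n)}\sim S_\psi^{\otimes_B(n+1)}$ produced in the proof of Theorem \eqref{thm45} (via Theorem \eqref{prop 1}) and restricting it along the forgetful functors ${}_{S_\psi}\mathcal{M}_{S_\psi}\longrightarrow{}_B\mathcal{M}_{S_\psi}\longrightarrow{}_B\mathcal{M}_B$, reading off the even- and odd-depth values from the definition in the introduction. In particular both numbers in the display arise there as definitional bookkeeping (the even- and odd-depth readings of the restricted similarity); the paper makes no attempt to exclude similarities at lower tensor powers.

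The genuine gap is in your lower-bound argument, whose decisive step is false. You rule out the case $k=d(A,{}_{S_\psi}\mathcal{M}_{S_\psi})$ by asserting $A^{\otimes(d)}\not\sim A^{\otimes(d+1)}$ at $d=d(A,{}_{S_\psi}\mathcal{M}_{S_\psi})$, on the grounds that the monoidal unit need not occur among the constituents; but the proof of Theorem \eqref{thm45} establishes exactly the opposite: the face/degeneracy splittings give $A^{\otimes(m)}\mid A^{\otimes(m+1)}$ in the Krull--Schmidt category, so $T_d(A)\sim T_{d+1}(A)$ forces $A^{\otimes(d)}\sim A^{\otimes(d+1)}$, and hence (via $\theta$ and restriction) a $B$-$B$-similarity already at level $d$. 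Thus the contradiction you aim for at $k=d$ does not exist, and no exclusion argument of this type can be extracted from an upper bound on the module depth of $A$. Two auxiliary steps are also unjustified as written: the implication $A^{\otimes(k)}\sim A^{\otimes(k+1)}\Rightarrow T_k(A)\sim T_{k+1}(A)$ is the converse direction that in the paper requires the module-coalgebra splitting of Proposition \eqref{module coalgebra depth}, which $A$ is not assumed to possess; and the ``descent'' $A^{\otimes(k)}\otimes B\sim A^{\otimes(k+1)}\otimes B\Rightarrow A^{\otimes(k)}\sim A^{\otimes(k+1)}$, which you leave as a delicate verification, is available in the paper only when $B$ is augmented (Corollary \eqref{augmented algebra equality}, tensoring by $-\otimes_B k$); the hypothesis $A\in{}_B\mathcal{M}_B$ alone does not supply it. So your proposal proves the upper estimate but not the stated lower one; the latter is not obtained by your route, and in the paper it rests only on the restriction-and-definition bookkeeping described above.
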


\begin{proof}
Assume $d(A,_{S_\psi}\mathcal{M}_{S_\psi}) = n$ then applying restriction of modules, (the forgetful functors $_{S_\psi}\mathcal{M}_{S_\psi} \longrightarrow _B\mathcal{M}_{S_\psi} \longrightarrow _B\mathcal{M}_B$), and using the definition for odd and even depth found in the introduction yield the result.
\end{proof}

Given $A$ is a left $B$-module algebra, there is a series of other inequalities one gets from similarities of tensor powers of $A$ in $_B\mathcal{M}_{S_\psi}$ and in $_B\mathcal{M}_B$, in particular by applying the techniques of Theorem \eqref{prop 1} in the afore mentioned cases one gets:
\begin{equation}
\label{minimum depth ineq}
2d(A,_B\mathcal{M}_B)+1 \leq d(B,S_\psi) \leq 2d(A,_B\mathcal{M}_{S_\psi})+2
\end{equation}

\begin{co}
\label{augmented algebra equality}
Let $B$ be an augmented algebra. Then the inequality \eqref{factorisation h depth} becomes an equality. 
\end{co}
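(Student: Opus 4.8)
The inequality \eqref{factorisation h depth} is Theorem \ref{thm45}, so what is left to prove is the reverse inequality, and the plan is to run the proof of Theorem \ref{thm45} backwards. Let $n$ be least with $S_\psi^{\otimes_B(n)}\sim S_\psi^{\otimes_B(n+1)}$ as $S_\psi$-bimodules, the datum that computes $d_h(B,S_\psi)$. Through the bimodule isomorphisms $\theta_\bullet$ of Theorem \ref{prop 1}, this similarity is \emph{equivalent} to $A^{\otimes(n)}\otimes B\sim A^{\otimes(n+1)}\otimes B$ in ${}_{S_\psi}\mathcal{M}_{S_\psi}$, so all that is missing --- the one spot where the proof of Theorem \ref{thm45} used an implication and not an equivalence --- is the cancellation of the trailing tensorand $B$: deducing $A^{\otimes(n)}\sim A^{\otimes(n+1)}$ in ${}_{S_\psi}\mathcal{M}_{S_\psi}$ from $A^{\otimes(n)}\otimes B\sim A^{\otimes(n+1)}\otimes B$. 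This is exactly the step that fails for a general subalgebra $B$ of $S_\psi$ and that an augmentation makes available.

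To carry out the cancellation, write $\varepsilon_B\colon B\to k$ for the augmentation and $\eta_B$ for the unit, so $\varepsilon_B\eta_B=\mathrm{id}_k$. The heart of the matter is to build $S_\psi$-bimodule maps --- assembled from $\eta_B$, $\varepsilon_B$ and $\psi$ --- that realise $A^{\otimes(m)}$, carried with the $S_\psi$-bimodule structure it inherits through $\theta_m$, as a direct summand of $S_\psi^{\otimes_B(m)}\cong A^{\otimes(m)}\otimes B$ in ${}_{S_\psi}\mathcal{M}_{S_\psi}$, whose complementary summand brings in no indecomposable constituents not already present in $T_m(A)$; heuristically, one is splitting the trivial $B$-module off the regular one coherently with the twisted actions \eqref{psi linearity}. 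The only genuine work is checking that these maps are $S_\psi$-bilinear; unwinding \eqref{psi linearity} and \eqref{factorisation associativity}, this reduces to the normalisation identities $\psi(1_B\otimes a)=a\otimes1_B$, $\psi(b\otimes1_A)=1_A\otimes b$ together with multiplicativity of $\varepsilon_B$ --- precisely what ``$B$ augmented'' provides. I expect this verification to be the main --- and essentially the only non-formal --- obstacle.

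Granting it, $A^{\otimes(n)}\sim A^{\otimes(n+1)}$ in ${}_{S_\psi}\mathcal{M}_{S_\psi}$, and one finishes as in Proposition \ref{module coalgebra depth}: the standard degeneracy maps keep the indecomposable-constituent sets $\mathrm{Ind}(A^{\otimes(j)})$ non-decreasing in $j$, so $\mathrm{Ind}(T_{n+1}(A))=\mathrm{Ind}(A^{\otimes(n+1)})=\mathrm{Ind}(A^{\otimes(n)})=\mathrm{Ind}(T_n(A))$, whence $T_{n+1}(A)\sim T_n(A)$ and $d(A,{}_{S_\psi}\mathcal{M}_{S_\psi})\le n$. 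Matching this against the definition of $d_h(B,S_\psi)$ --- and using that both directions are carried by the same isomorphisms $\theta_\bullet$, so that the least admissible indices on the two sides agree --- yields the inequality opposite to \eqref{factorisation h depth}, and hence equality; the degenerate cases ($A$ a direct sum of trivial modules, or infinite $H$-depth) are immediate from the definitions.
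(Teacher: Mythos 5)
You have correctly isolated the crux: after Theorem \eqref{prop 1}, the only missing step for the reverse of \eqref{factorisation h depth} is cancelling the trailing tensorand $B$ from $A^{\otimes(n)}\otimes B\sim A^{\otimes(n+1)}\otimes B$. But the device you propose for the cancellation --- realising $A^{\otimes(m)}$ as a direct summand of $A^{\otimes(m)}\otimes B\cong S_\psi^{\otimes_B(m)}$ \emph{inside} $_{S_\psi}\mathcal{M}_{S_\psi}$ --- is exactly the point you leave unproven (``Granting it\dots''), and it is not a routine verification that an augmentation supplies. The natural candidate retraction $\mathrm{id}\otimes\varepsilon_B$ is not right $S_\psi$-linear: by \eqref{psi linearity} the right action pushes $B$ past $A$ through $\psi$, and there is no factorisation axiom forcing $(A\otimes\varepsilon_B)\psi(b\otimes c)=\varepsilon_B(b)\,c$; already for a smash product $A\# H$ one has $(A\otimes\varepsilon_H)\psi(h\otimes a)=h\cdot a\neq\varepsilon_H(h)a$ in general, so the normalisation of $\psi$ together with multiplicativity of $\varepsilon_B$ does \emph{not} rescue bilinearity. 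Indeed a two-sided splitting of this kind is too strong to hope for: in the Heisenberg double $H\# H^*\cong\mathrm{End}(H)$ every finite-dimensional $S_\psi$-bimodule has dimension divisible by $(\dim H)^2$, so nothing of dimension $\dim H$ can occur as a bimodule summand of $S_\psi$ at all. So as written the argument has a genuine gap at its central step.

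The cancellation does not require any summand statement in the bimodule category; this is where the paper's proof is both simpler and correct. The augmentation $\varepsilon_B$ makes $k$ a $B$-module, and one applies the additive functor $-\otimes_B k$ (after restricting the right structure to $B$) to both sides of $A^{\otimes(n+1)}\otimes B\sim A^{\otimes(n)}\otimes B$. Under $\theta_m$ the right $B$-action on $A^{\otimes(m)}\otimes B$ is just multiplication in the last tensorand (because $\psi(b\otimes 1_A)=1_A\otimes b$), so $(A^{\otimes(m)}\otimes B)\otimes_B k\cong A^{\otimes(m)}$, and since additive functors preserve the similarity relation one gets $A^{\otimes(n+1)}\sim A^{\otimes(n)}$ at once, whence $d(A,{}_{S_\psi}\mathcal{M}_{S_\psi})\leq n$ and the equality. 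You should replace your splitting argument by this one-line functorial cancellation; your concluding remarks about the indecomposable constituents can then stay as they are.
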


\begin{proof}
It suffices to tensor on the right by $ (- \otimes_B k) $ on both sides of the similarity $A^{\otimes (n+1)} \otimes B \sim A^{\otimes (n)}\otimes  B$ to get $A^{\otimes n+1} \sim A^{\otimes n}$.
\end{proof}

\begin{co}
\label{smash depth}
Let $H$ be a finite dimensional Hopf algebra and $A$ a left $H$-module algebra measured by $H$. Consider $A\# H$, the smash product of $A$ and $H$. Then 
\begin{equation}
d_h(H,A\# H) = 2d(A, _{A\#H}\mathcal{M}_{A\# H}) + 1
\end{equation}
\end{co}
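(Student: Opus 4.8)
The plan is to recognise Corollary \ref{smash depth} as a direct specialisation of Theorem \ref{thm45} together with Corollary \ref{augmented algebra equality}, so that beyond identifying the smash product as a factorisation algebra and checking two hypotheses there is nothing further to do.

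First I would invoke the Example following the definition of a factorisation algebra: for a left $H$-module algebra $A$ measured by $H$, the smash product $A\#H$ is precisely the factorisation algebra $S_\psi = A\otimes_\psi H$ attached to $\psi : H\otimes A\to A\otimes H$, $h\otimes a\mapsto h_1\cdot a\otimes h_2$. Under this identification the subalgebra over which depth is measured is $B = H$, and the distinguished factor is $A$, regarded as $A\otimes_\psi 1_H \in {}_{A\#H}\mathcal{M}_{A\#H}$; thus the quantity $d(A,{}_{A\#H}\mathcal{M}_{A\#H})$ in the statement is exactly the module depth entering \eqref{factorisation h depth} for this $S_\psi$.

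Next I would check the hypotheses. Since $A$ and $H$ are finite dimensional, $A\#H$ is a finite dimensional algebra, so $_{A\#H}\mathcal{M}$ is a finite tensor category, in particular Krull-Schmidt; this is the hypothesis of Theorem \ref{thm45}, which then gives $d_h(H,A\#H)\leq 2d(A,{}_{A\#H}\mathcal{M}_{A\#H})+1$. Moreover $H$ is an augmented algebra, the augmentation being the counit $\varepsilon:H\to k$; this is the hypothesis of Corollary \ref{augmented algebra equality}, which promotes the inequality \eqref{factorisation h depth} to an equality. Spelled out, the mechanism is the one already used in that corollary: by Theorem \ref{prop 1} one has $(A\#H)^{\otimes_H(n)}\cong A^{\otimes(n)}\otimes H$ as $A\#H$-bimodules, and since the face and degeneracy maps give $A^{\otimes(m)}\mid A^{\otimes(m+1)}$ for all $m$, a similarity $(A\#H)^{\otimes_H(n)}\sim(A\#H)^{\otimes_H(n+1)}$ transports via $-\otimes_H k$ along $\varepsilon$ to $A^{\otimes(n)}\sim A^{\otimes(n+1)}$, equivalently $T_n(A)\sim T_{n+1}(A)$, which is $d(A,{}_{A\#H}\mathcal{M}_{A\#H})\leq n$.

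I do not anticipate a genuine obstacle: the only points requiring attention are the bookkeeping of conventions, namely that it is the second tensor factor $H$ (not $A$) that plays the role of the subalgebra $B$ in the extension, and the observation that the Krull-Schmidt hypothesis of Theorem \ref{thm45} is automatic here because everything is finite dimensional. All the substantive work has already been carried out in the general factorisation-algebra results of this section.
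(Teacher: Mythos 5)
Your proposal is correct and is essentially the paper's own route: the paper gives no separate argument for this corollary precisely because it is the specialisation of Theorem \eqref{thm45} and Corollary \eqref{augmented algebra equality} to the factorisation $\psi(h\otimes a)=h_1\cdot a\otimes h_2$ realising $A\#H$, with the counit of $H$ supplying the augmentation. Your added bookkeeping (finite dimensionality giving the Krull--Schmidt hypothesis, and the $-\otimes_H k$ transport yielding the reverse inequality) is exactly the intended mechanism.
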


\begin{ex}\cite[Example 6,6]{HKY}
\label{Heisenberg algebra}
Considering $H$ and $H^*$ a Hopf algebra and its dual and forming the Heisenberg double $H \# H^*$  via $f\rightharpoonup h = h_1f(h_2)$ one verifies 
\begin{equation}
d(H^*, H\#H^*) = 3
\end{equation}
\end{ex}

\begin{proof}
Restricting to $_H\mathcal{M}_H$ one recovers the formula for odd depth $d(H,H^*\# H) = 2d(H^*, _H\mathcal{M}_H)+1$ on the left hand side of equation \eqref{minimum depth ineq}. Considering $H$ is a Frobenius algebra, by self duality $_{H^*}H^*_{H^*} \cong$ $ _HH^*_H$. Then, since $d(H^*, _{H^*}\mathcal{M}_{H^*}) = 1$ the result follows.
\end{proof}

\subsection{Minimum depth and the generalised factorised smash product}
\label{GSP}

As an example of the previous results we will use a construction by \textit{Brzezinski} \cite[Section 3]{Bz1} to get a result on depth of a finite dimensional Hopf algebra extension via the generalised factorised smash product of $Q^{*op}$ and $H$.

Let $C$ be a $k$-coalgebra, recall that its $k$-dual $C^*$ is a $k$-algebra via the convolution product and evaluation: $C\otimes C^* \longrightarrow k$; $c\otimes \theta \longmapsto \theta(c)$.

\begin{de}

Let  $A$ be an algebra and $C$ a coalgebra. Let $(A,C)_\psi$ be an entwining structure as defined in \cite{BZ}. We say $(A,C)_\psi$ is factorisable \cite[Section 3]{Bz1}if there exists a unique 

\begin{equation}
\label{eq}
\overline{\psi}:A\otimes C^*\longrightarrow C^*\otimes A
\end{equation}
such that the following diagram commutes:
\begin{equation}
\label{eq24}
\begin{tikzcd}
C\otimes A \otimes C^*  \arrow{r}{C\otimes \overline{\psi} }  \arrow{d}[swap]{\psi\otimes\ C^*} & C\otimes C^*\otimes A  \arrow{d}{ev_{C^*}\otimes A} \\
A\otimes C\otimes C^*  \arrow{r}[swap]{A\otimes ev_{C^*}}                  &  A 
\end{tikzcd}
\end{equation}
We write $\overline{\psi}(a\otimes \theta) = \theta_i \otimes a^i$ for every $a \in A$ and $\theta \in C^*$, summation over $i$ is understood.

\end{de}

Furthermore entwining structures $(A,C)_\psi$ are factorisable provided $C$ is $k$ projective \cite{Bz1} when $k$ is a general commutative ring.

Let now $R\subseteq H$ be a finite dimensional Hopf algebra extension, let $Q$ be their quotient module coalgebra.

Let $C = Q$. We have that $(H,Q)_\psi$ defined by $\psi(\overline{h}\otimes g) = g_{(1)}\otimes \overline{hg_{(2)}}$  for all $h , g\in H$ is an entwining structure. Since $Q$ is a vector space over $k$ it is $k$-projective. Now consider
\begin{equation*}
\overline{\psi}:H\otimes Q^* \longrightarrow Q^*\otimes H
\end{equation*} 
\begin{equation}
\label{eq25}
h\otimes \theta\longmapsto (h_{(2)}\rightharpoonup \theta)\otimes h_{(1)}
\end{equation}
here the action is
\begin{equation*}
(h\rightharpoonup \theta)(-) = \theta (\overline{-h})
\end{equation*}
For $h, - \in H$.

Then the diagram \eqref{eq24} is satisfied and $(H,Q)_\psi$ is factorisable via $\overline{\psi}$ as defined above.

\begin{de}
Consider now the algebra $Q^{*op}$, that is to say, for all $\theta, \gamma \in Q^{*op}$ and $h\in H$ we have $<\overline{h},\theta\gamma> = <\overline{h_{(2)}},\theta><\overline{h_{(1)}},\gamma>$. Then we form the generalised factorised smash product algebra $Q^{*op}\#_{\overline{\psi}} H$ with product given by 
\begin{equation}
(\theta\# h)(\gamma\# g) = \theta(h_{(2)}\rightharpoonup \gamma)\# h_{(1)}g
\end{equation}
\end{de}

Of course we identify $Q^{*op}$ with $Q^{*op}\# 1_H$ and $H$ with $\varepsilon_H\# H$. Let $h, g \in H$ and $\theta, \gamma \in Q^{*op}$, the multiplication yields
\begin{equation}
(\varepsilon_H\# h)(\varepsilon_H\# g) = \varepsilon_H(h_{(2)}\rightharpoonup\varepsilon_H)\#h_{(1)}g = \varepsilon_H \# hg
\end{equation}
and 
\begin{equation}
(\theta\# 1_H)(\gamma\# 1_H) = \theta(1_H\rightharpoonup \gamma)\# 1_H1_H = \theta\gamma\# 1_H 
\end{equation}

Hence both $H$ and $Q^{*op}$ are subalgebras in $Q^{*op}\#_{\overline{\psi}} H$. Moreover $Q^{*op}\#_{\overline{\psi}} H$ is an $H$ and $Q^{*op}$ left and right module via multiplication.

\begin{thm}
\label{gspd}
Let $R\subseteq H$ be a finite dimensional Hopf algebra extension and $Q = H/R^+ H$ its quotient module coalgebra. Let $Q^{*op}\#_{\overline{\psi}} H$ be the generalised factorised smash product. Then 
\begin{equation}
d_{odd}(H,Q^{*op}\#_{\overline{\psi}} H) = d_h(R,H)
\end{equation}
\end{thm}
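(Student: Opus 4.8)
\emph{Proof proposal.} Write $S:=Q^{*op}\#_{\overline{\psi}}H$. The plan is to use Theorem \eqref{prop 1} to convert the $H$-tensor powers of $S$ into tensor powers of $Q$, recover the very similarity that Proposition \eqref{hdepth} uses to compute $d_h(R,H)$, and then match indices. First I would apply Theorem \eqref{prop 1} to the factorisation algebra $S=Q^{*op}\otimes_{\overline{\psi}}H$ (so $A=Q^{*op}$, $B=H$), obtaining $S^{\otimes_H(n)}\cong (Q^{*op})^{\otimes(n)}\otimes H$ as $S$-bimodules, hence as $H$-$H$-bimodules by restriction. A direct computation with $\theta_n$ and \eqref{factorisation associativity}, modelled on the case $n=1$ (where the product of $S$ gives $(\varepsilon_H\#h)(\theta\#g)=(h_{(2)}\rightharpoonup\theta)\#h_{(1)}g$ and $(\theta\#g)(\varepsilon_H\#h)=\theta\#gh$), then shows that under this identification the right $H$-action on $(Q^{*op})^{\otimes(n)}\otimes H$ is right multiplication on the tensorand $H$, while the left $H$-action is the diagonal one, twisting the legs $\theta_i$ by $\rightharpoonup$ through the coproduct. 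Since $d_{odd}(H,S)=2n+1$ means precisely $S^{\otimes_H(n)}\sim S^{\otimes_H(n+1)}$ as $H$-$H$-bimodules, it suffices to locate the least $n$ with $(Q^{*op})^{\otimes(n)}\otimes H\sim (Q^{*op})^{\otimes(n+1)}\otimes H$ as $H$-$H$-bimodules.

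Next I would strip off the free factor. Applying $(-\otimes_H k)$ on the right collapses the tensorand $H$ and leaves $(Q^{*op})^{\otimes(n)}\sim (Q^{*op})^{\otimes(n+1)}$ in $_H\mathcal{M}$ for the diagonal $\rightharpoonup$-action; and because $Q^{*op}$ is an $H$-module algebra, $\theta\mapsto 1\otimes\theta$ splits the multiplication $H$-linearly, so $Q^{*op}\,\big|\,(Q^{*op})^{\otimes(2)}$, and exactly as in the proof of Proposition \eqref{module coalgebra depth} the indecomposable constituents of the truncated algebras stabilise together with those of the tensor powers, forcing $d(Q^{*op},{}_H\mathcal{M})\le n$. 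Conversely, if $d(Q^{*op},{}_H\mathcal{M})=m$ then $(Q^{*op})^{\otimes(m)}\sim (Q^{*op})^{\otimes(m+1)}$, and applying the split-injection-preserving functor $(-\otimes H)$ (into $_H\mathcal{M}_H$ with the above diagonal-left, multiplication-right structure) and then Theorem \eqref{prop 1} again gives $S^{\otimes_H(m)}\sim S^{\otimes_H(m+1)}$, so $d_{odd}(H,S)\le 2m+1$. Hence $d_{odd}(H,S)=2\,d(Q^{*op},{}_H\mathcal{M})+1$.

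Finally I would identify $d(Q^{*op},{}_H\mathcal{M})$ with $d(Q,{}_H\mathcal{M})$. The module $(Q^{*op},\rightharpoonup)$ is the $k$-linear dual of $Q$ with the right action $\overline{y}\cdot h=\overline{yh}$; passing to linear duals is a contravariant equivalence between $_H\mathcal{M}$ and $\mathcal{M}_H$ that carries $\otimes$ to $\otimes$ up to the symmetry and preserves indecomposables and split injections, hence module depth, and combined with the antipode-twist equivalence $\mathcal{M}_H\simeq{}_H\mathcal{M}$ (equivalently, with the evident left--right mirror of the proof of Proposition \eqref{hdepth}) it gives $d(Q^{*op},{}_H\mathcal{M})=d(Q,{}_H\mathcal{M})$. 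Combining this with the previous paragraph and Proposition \eqref{hdepth} yields $d_{odd}(H,S)=2\,d(Q,{}_H\mathcal{M})+1=d_h(R,H)$.

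The step I expect to be the main obstacle is this last identification together with pinning down the transported left $H$-action in the first step: keeping straight which of the several $H$-module structures carried by $Q=H/R^+H$ (left versus right multiplication, versus the linear dual of either) is the one produced after the reduction, and that it is the one matched by Proposition \eqref{hdepth}. A cleaner alternative that bypasses the duality bookkeeping is to prove directly that $S^{\otimes_H(n)}\cong H^{\otimes_R(n+1)}$ as $H$-$H$-bimodules (the dimensions agree by Nichols--Zoeller, and $Q^{*op}\#_{\overline{\psi}}H$ is built precisely to model the Galois coring $H\otimes Q\cong H\otimes_R H$); granting this, $d_{odd}(H,S)=2n_0+1$ where $n_0$ is the least $n$ with $H^{\otimes_R(n+1)}\sim H^{\otimes_R(n+2)}$ as $H$-$H$-bimodules, which is $d_h(R,H)$ by definition.
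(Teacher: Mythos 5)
Your proposal is correct and follows essentially the same route as the paper: you establish $d_{odd}(H,S)=2d(Q^{*op},{}_H\mathcal{M})+1$ (the paper cites Corollary \eqref{smash depth}, restricted as in Example \eqref{Heisenberg algebra}, where you rederive it from Theorem \eqref{prop 1} together with the $(-\otimes_H k)$ collapse), then pass from $Q^{*op}$ to $Q$ by linear duality and a left--right switch (the paper's ``self duality and opposing categories'' step via the Frobenius extension $Q^*\to H^*$), and conclude with Proposition \eqref{hdepth}. The only real difference is that you fill in the bimodule-structure and duality bookkeeping that the paper leaves implicit.
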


\begin{proof}
Assume then that $d_{odd}(H, Q^{*op} \#_{\overline{\psi}} H) = 2n + 1$. Apply corollary \eqref{smash depth} to get $d(Q^{* op}, _H\mathcal{M}) = n$. Since $Q^* \longrightarrow H^*$ is a Frobenius extension \cite{Ka2} one can see that this implies by self duality and opposing categories $d(Q,\mathcal{M}_H) = n$. By theorem \eqref{hdepth} we get $d_h(R,H) = 2n + 1$ which is what we wanted.
\end{proof}

\section{Depth of double crossed products and the Drinfeld double}
\label{dcp}

Double crossed products were introduced by Majid in \cite{Ma1} with the purpose of dealing with factorisations of Hopf algebras and generalising the construction of the Drinfel\vtick d double of a Hopf algebra. In the subsequent literature one finds that such product is referred to as bicrossed product in an attempt to reconcile the term with its roots in group theory. We will refer to it simply as double crossed product since it makes no difference for our purposes. For a comprehensive account of the theory treated in this section  the reader may refer to \cite{Ma1} and \cite{Ma} as well as \cite{Bu2}, \cite{ABM} and \cite{Ta}.

Let $H , K$ be two Hopf algebras such that $H$ is a right $K$-module coalgebra and $K$ a left $H$-module coalgebra: 

\begin{de}
\label{matched pair}
We say $K$ and $H$ are a matched pair if there are coalgebra maps
\begin{equation}
\alpha: H\otimes K \longrightarrow H; \quad h\otimes k \longmapsto h\triangleleft k
\end{equation}
\begin{equation}
\beta : H\otimes K \longrightarrow K; \quad h\otimes k \longmapsto h\triangleright k
\end{equation}
such that the following compatibility conditions hold:
\begin{equation}
(hg)\triangleleft k = \sum (h\triangleleft(g_1\triangleright k_1))(g_2\triangleleft k_2) ; \quad 1_H\triangleleft k = \varepsilon_K(k) 1_H
\end{equation}
\begin{equation}
h\triangleright (kl) = \sum (h_1\triangleright k_1)((h_2\triangleleft k_2)\triangleright l) ; \quad h \triangleright 1_K = \varepsilon_H(h) 1_K
\end{equation}
\end{de}

\begin{de}
\label{bicrossed product}
Let $H,K$ be a matched pair of Hopf algebras. The $\mathbf{double}$ $\mathbf{crossed}$ $\mathbf{product}$ of $H$ and $K$ is a Hopf algebra denoted $K\bowtie H$ over the set $K\otimes H$ with product given by
\begin{equation}
(k\bowtie h)(l\bowtie g) = \sum k(h_1\triangleright l_1)\bowtie (h_2\triangleleft l_2)g
\end{equation}
Moreover, the coalgebra structure and antipode are defined in the following way:
\begin{equation}
\Delta(k\bowtie h) = k_1\bowtie h_1 \otimes k_2\bowtie h_2
\end{equation}
\begin{equation}
\varepsilon (k\otimes h) = \varepsilon_K(k)\varepsilon_H(h)
\end{equation}
\begin{equation*}
S(k\bowtie h) = (1_K\bowtie S_H(h))(S_K(k)\bowtie k) 
\end{equation*}
\begin{equation}
= S_H(h_1)\triangleright S_K(k_1) \bowtie S_H(h_2)\triangleleft S_K(k_2)
\end{equation}
\end{de}

\begin{ex}\cite{Bu2}
Suppose $H$ is a Hopf algebra and $L$ and $A$ two sub-Hopf algebras, such that $H \cong A\otimes_\psi L$ is a factorisation, then the multiplication $m: L \otimes A \longrightarrow H$ defined by $a\otimes l \longmapsto al$ is a bijection. This implies $A\bigcap L = k$. Then following \cite{Ma} one concludes that $H$ is a double crossed product in the following way: Consider the map
\begin{equation*}
\mu: L\otimes A \longrightarrow A\otimes L; \quad l\otimes a \longmapsto m^{-1}(la)
\end{equation*}
then define 
\begin{equation*}
\triangleright: L\otimes A\longrightarrow A; \quad l\triangleright a = ((Id\otimes \varepsilon_L)\circ \mu)(l\otimes a)
\end{equation*}
\begin{equation*}
\triangleleft: L\otimes A\longrightarrow L; \quad l\triangleleft a = ((\varepsilon_A \otimes Id)\circ \mu)(l\otimes a)
\end{equation*}
\end{ex}

\begin{ex}\cite{ABM}
Let $G$ and $K$ be groups, $A=k[G]$ and $H=k[K]$ the corresponding group algebras. There is a bijection between the set of all matched pairs Hopf algebras $(k[G], k[K], \triangleleft , \triangleright)$ and the set of all matched pairs of groups $(G,K, \overline{\triangleleft} , \overline{\triangleright})$, in the sense of Takeuchi \cite{Ta}. The bijection is given such that there is an isomorphism of Hopf algebras $k[G] \bowtie k[K] \cong k[G\bowtie K]$, where $G\bowtie K $ is the Takeuchi double crossed product of groups, \cite{Kas}. 
\end{ex}

\begin{pr}
Double crossed products are factorisation algebras
\end{pr}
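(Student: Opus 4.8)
The plan is to exhibit an explicit factorisation map $\psi\colon H\otimes K\longrightarrow K\otimes H$ on the double crossed product $K\bowtie H$ and verify the axioms of the definition of factorisation algebra. The natural candidate is $\psi(h\otimes k)=\sum (h_1\triangleright k_1)\otimes (h_2\triangleleft k_2)$, read off directly from the product formula $(k\bowtie h)(l\bowtie g)=\sum k(h_1\triangleright l_1)\bowtie (h_2\triangleleft l_2)g$, so that the product of $K\bowtie H$ becomes exactly $(k\otimes h)(l\otimes g)=k\,\psi(h\otimes l)\,g$ with the two outer factors multiplied in $K$ and $H$ respectively. Here I identify $K$ with $K\bowtie 1_H$ and $H$ with $1_K\bowtie H$, which are subalgebras by the matched-pair identities $1_H\triangleleft k=\varepsilon_K(k)1_H$ and $h\triangleright 1_K=\varepsilon_H(h)1_K$ together with the coalgebra-map property of $\triangleleft$ and $\triangleright$.

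First I would check the two unit-type normalisation conditions $\psi(1_H\otimes k)=k\otimes 1_H$ and $\psi(h\otimes 1_K)=1_K\otimes h$: the first follows from $1_H\triangleleft k=\varepsilon_K(k)1_H$ and $1_H\triangleright k = k$ (the module-coalgebra unit axiom), i.e. $\sum(1_H\triangleright k_1)\otimes(1_H\triangleleft k_2)=\sum k_1\otimes\varepsilon_K(k_2)1_H=k\otimes 1_H$; the second is symmetric, using $h\triangleright 1_K=\varepsilon_H(h)1_K$ and $h\triangleleft 1_K=h$. Second, and this is the substantive step, I would verify the octagon/associativity condition \eqref{factorisation associativity}. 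Rather than unwinding that identity in the coordinate form $(ad_\alpha)_\beta\otimes b^\beta c^\alpha = a_\beta d_\alpha\otimes(b^\beta c)^\alpha$, I would argue conceptually: the factorisation axioms for $\psi$ are precisely equivalent to associativity of the product $(k\otimes h)(l\otimes g)=k\psi(h\otimes l)g$ on $K\otimes H$ once we know $K$ and $H$ embed as subalgebras, and associativity of $K\bowtie H$ is part of the statement that it is a Hopf algebra (Definition \ref{bicrossed product}, due to Majid). So the octagon for $\psi$ is a repackaging of the known associativity of the double crossed product, combined with the subalgebra property already noted.

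Concretely the argument runs: any element of $K\otimes H$ is a sum of products $(k\otimes 1_H)(1_K\otimes h)$, the multiplication of $K\bowtie H$ restricted to $K$ and to $H$ agrees with the given algebra structures on those Hopf algebras, and $\psi(h\otimes k)=(1_K\otimes h)(k\otimes 1_H)$ in $K\bowtie H$; then the defining octagon of a factorisation is exactly the statement that the two ways of reassociating $(1_K\otimes h)(k\otimes 1_H)(k'\otimes 1_H)\cdots$ — equivalently $((1_K\otimes h)(k\otimes 1_H))(k'\otimes 1_H)=(1_K\otimes h)((k\otimes 1_H)(k'\otimes 1_H))$ and its mirror — coincide, which holds because $K\bowtie H$ is associative. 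Thus $K\bowtie H = K\otimes_\psi H$ is a factorisation algebra. I expect the main obstacle to be purely bookkeeping: one must be slightly careful that the identification of $\psi$ as $(1_K\bowtie h)(k\bowtie 1_H)$ really reproduces $\sum(h_1\triangleright k_1)\bowtie(h_2\triangleleft k_2)$ and that no Sweedler-index collisions occur when translating Majid's associativity into the octagon; alternatively, if one prefers a direct verification, the compatibility conditions of Definition \ref{matched pair} are exactly what is needed to push $\psi$ past a multiplication on either side, and a careful but routine Sweedler-notation computation closes the octagon.
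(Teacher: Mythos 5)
Your proposal is correct and follows essentially the same route as the paper: the paper's proof consists of exhibiting exactly the same map $\psi(h\otimes k)=(h_1\triangleright k_1)\otimes(h_2\triangleleft k_2)$ and asserting that the factorisation axioms hold. Your explicit check of the unit conditions via the matched-pair and module-coalgebra axioms, together with the observation that the octagon \eqref{factorisation associativity} is an instance of the known associativity of $K\bowtie H$, whose multiplication coincides with the $\psi$-twisted product, simply supplies the verification that the paper dismisses as ``clearly''.
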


\begin{proof}
Let $K\bowtie H$ be a double crossed product, define $\psi: H\longrightarrow K$ in the following way:
\begin{equation}
\psi(h\otimes k) = (h_1\triangleright k_1)\bowtie (h_2\triangleleft k_2)
\end{equation}
Clearly $\psi$ satisfies the axioms of a factorisation and in this manner the double crossed product of $K$ and $H$ is a factorisation algbera
\end{proof}

\begin{thm}
\label{double crossed product depth}
The $H$ depth of the subalgebra $H$ in a double crossed product $K\bowtie H$ is given by
\begin{equation}
d_h(H,K\bowtie H) = 2d(K,_{K\bowtie H}\mathcal{M}_{K\bowtie H}) + 1
\end{equation}
\end{thm}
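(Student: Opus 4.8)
The plan is to recognize Theorem \eqref{double crossed product depth} as a direct specialization of Corollary \eqref{smash depth}, which states $d_h(H, A\# H) = 2d(A,_{A\#H}\mathcal{M}_{A\#H}) + 1$ for a left $H$-module algebra $A$ measured by $H$. First I would invoke the preceding Proposition, which shows a double crossed product $K\bowtie H$ is a factorisation algebra via $\psi(h\otimes k) = (h_1\triangleright k_1)\bowtie(h_2\triangleleft k_2)$. So $K\bowtie H = K\otimes_\psi H$ in the notation $S_\psi = A\otimes_\psi B$ with $A = K$ and $B = H$. Then Theorem \eqref{thm45} gives $d_h(H, K\bowtie H) \leq 2d(K,_{K\bowtie H}\mathcal{M}_{K\bowtie H}) + 1$, and by Theorem \eqref{prop 1} the isomorphism $(K\bowtie H)^{\otimes_H(n)} \cong K^{\otimes(n)}\otimes H$ of $S_\psi$-bimodules is available.

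The reverse inequality is where the real content lies. For the smash-product Corollary \eqref{smash depth}, the reverse inequality came (implicitly, via Corollary \eqref{augmented algebra equality}) from the fact that $H$ is an augmented algebra: tensoring the similarity $K^{\otimes(n+1)}\otimes H \sim K^{\otimes(n)}\otimes H$ on the right by $-\otimes_H k$ using the counit $\varepsilon_H$ collapses $H$ and yields $K^{\otimes(n+1)} \sim K^{\otimes(n)}$ as... but one needs this similarity to hold in $_{K\bowtie H}\mathcal{M}_{K\bowtie H}$, i.e. as bimodules over the whole algebra, and one needs to recover the truncated-algebra similarity $T_{n+1}(K)\sim T_n(K)$. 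So the key steps I would carry out are: (i) invoke the factorisation structure and Theorem \eqref{thm45} for ``$\leq$''; (ii) for ``$\geq$'', assume $d_h(H,K\bowtie H) = 2n-1$, translate via Theorem \eqref{prop 1} to $K^{\otimes(n)}\otimes H \sim K^{\otimes(n-1)}\otimes H$ as $S_\psi$-bimodules; (iii) apply $-\otimes_H k$ on the right — noting $H$ is augmented as a Hopf algebra — to get $K^{\otimes(n)}\sim K^{\otimes(n-1)}$ as left $K\bowtie H$-modules (the right $K\bowtie H$-action surviving as a right $k$-trivialized action, but the left action intact), hence as objects in $_{K\bowtie H}\mathcal{M}$; (iv) use that $K$ is a module coalgebra (it is a Hopf algebra, hence a coalgebra, and the left $K\bowtie H$-action on $K^{\otimes(n)}$ is compatible with its coproduct) together with Proposition \eqref{module coalgebra depth} to upgrade $K^{\otimes(n)}\sim K^{\otimes(n-1)}$ to $d(K,_{K\bowtie H}\mathcal{M}) \leq n-1$.

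The main obstacle, and the point that needs care, is verifying that the relevant $K\bowtie H$-module-coalgebra structure on $K$ is the right one for Proposition \eqref{module coalgebra depth} to apply. In the Heisenberg/smash-product case of \cite{HKY} the module algebra $A$ carried both an $H$-action and an $A\#H$-module structure, and the coalgebra splitting argument was applied in $_{A\#H}\mathcal{M}$; here $K$ is genuinely a Hopf algebra sitting inside $K\bowtie H$, so I expect its comultiplication $\Delta_K$ to be a morphism in $_{K\bowtie H}\mathcal{M}$ (this should follow from $K$ being a left $H$-module coalgebra in the matched-pair sense, plus left multiplication by $K$ being coalgebra-compatible), and then the counit $\varepsilon_K$ splits it, giving $\mathrm{Ind}(K^{\otimes(n)}) \subseteq \mathrm{Ind}(T_n(K))$ exactly as in the proof of Proposition \eqref{module coalgebra depth}. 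Once that is in place, combining with the face/degeneracy divisibility $K^{\otimes(m)} \mid K^{\otimes(m+1)}$ yields $T_n(K)\sim T_{n+1}(K)$ and hence the reverse inequality, completing the equality. I would also remark that this theorem is the $(A,B) = (K,H)$ instance of Corollary \eqref{smash depth} once one observes that $K\bowtie H$ is, up to the factorisation $\psi$, a smash product in the generalized sense, so that the proof is essentially a citation of that corollary together with the Proposition identifying double crossed products as factorisation algebras.
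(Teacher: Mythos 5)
Your proposal is correct and takes essentially the same route as the paper: the paper's proof is simply a citation of Corollary \eqref{augmented algebra equality} applied to the factorisation algebra $K\bowtie H$ (via the preceding Proposition, with $A=K$ and $B=H$ augmented by its counit), and your steps (i)--(iv) just unpack that corollary --- Theorem \eqref{thm45} for one inequality, the collapse $-\otimes_H k$ together with the coalgebra splitting of Proposition \eqref{module coalgebra depth} for the other. Your additional verification that $\Delta_K$ and $\varepsilon_K$ are $K\bowtie H$-module maps (and your minor off-by-one in the tensor-power indexing, a looseness the paper's own statements share) only make explicit what the paper leaves implicit.
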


\begin{proof}
This is just Corollary \eqref{augmented algebra equality}
\end{proof}

Consider the double crossed  product of two Hopf algebras $H$ and $K$, we know already that $K\bowtie H$ is itself a Hopf algebra, and both $H$ and $K$ Hopf subalgebras via inclusion and multiplication. Then we have two cases for the quotinet module of the pair of extensions: $Q_K = K\bowtie H / K^+ K\bowtie H$ and $Q_H = K\bowtie H / H^+ K\bowtie H$. Of course both quotient modules are objects in $_{K\bowtie H}\mathcal{M}_{K\bowtie H}$ and both restrict naturally as bimodules over their respective subalgebra.

\begin{thm}
\label{depth of Q and D(H)}
Let $K\bowtie H$ a double crossed product of Hopf algebras, such that $d(H, K\bowtie H)$ is finite. Then
\begin{equation}
d(Q_H, _{K\bowtie H}\mathcal{M}_{K\bowtie H}) = d(K, _{K\bowtie H}\mathcal{M}_{K\bowtie H})
\end{equation}
\end{thm}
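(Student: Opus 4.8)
The plan is to realise both sides of the asserted identity as the same invariant of the Hopf subalgebra pair $H\subseteq K\bowtie H$, namely its $H$-depth, read off in two different ways. Since $K\bowtie H$ is a Hopf algebra with $H$ (identified with $1_K\bowtie H$) a Hopf subalgebra, the pair $H\subseteq K\bowtie H$ is a finite dimensional Hopf algebra extension whose quotient module coalgebra is precisely $Q_H = K\bowtie H/H^+(K\bowtie H)$; and the hypothesis that $d(H,K\bowtie H)$ is finite forces $d_h(H,K\bowtie H)$ to be finite by \eqref{finite depth}.

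First I would apply Proposition \eqref{hdepth} to this extension, which gives
\begin{equation*}
d_h(H,K\bowtie H) = 2\,d(Q_H,\,{}_{K\bowtie H}\mathcal{M}) + 1 .
\end{equation*}
Next I would use that $K\bowtie H$ is the factorisation algebra $S_\psi$ with factors $A = K$ and $B = H$ (the proposition preceding Theorem \eqref{double crossed product depth}), and that $H$ is an augmented algebra because it is a Hopf algebra; Theorem \eqref{double crossed product depth} — which is Corollary \eqref{augmented algebra equality} specialised to this factorisation — then gives
\begin{equation*}
d_h(H,K\bowtie H) = 2\,d(K,\,{}_{K\bowtie H}\mathcal{M}_{K\bowtie H}) + 1 .
\end{equation*}
Equating the two right-hand sides and cancelling yields $d(Q_H,\,{}_{K\bowtie H}\mathcal{M}) = d(K,\,{}_{K\bowtie H}\mathcal{M}_{K\bowtie H})$.

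What remains is to pass from the left module category to the bimodule category on the left, i.e.\ to see that $d(Q_H,\,{}_{K\bowtie H}\mathcal{M}_{K\bowtie H}) = d(Q_H,\,{}_{K\bowtie H}\mathcal{M})$. As noted just before the statement, $Q_H$ is an object of ${}_{K\bowtie H}\mathcal{M}_{K\bowtie H}$; I would check that with respect to this structure the comultiplication $\Delta_{Q_H}$ is a bimodule map split by the counit $\varepsilon_{Q_H}$, so that the proof of Proposition \eqref{module coalgebra depth} applies unchanged in ${}_{K\bowtie H}\mathcal{M}_{K\bowtie H}$. Then a bimodule similarity $Q_H^{\otimes (n)} \sim Q_H^{\otimes (n+1)}$ forces $d(Q_H,\,{}_{K\bowtie H}\mathcal{M}_{K\bowtie H}) \le n$, the reverse inequality being formal; and such a similarity is produced exactly as in the proof of Proposition \eqref{hdepth}, but feeding in the $S_\psi$-bimodule isomorphisms of Theorem \eqref{prop 1} and \eqref{depth iso} in place of their one-sided restrictions and then contracting against $k$ on the appropriate side. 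I expect this reconciliation of module categories for $Q_H$ to be the only step requiring real care; everything else is the formal composition of Proposition \eqref{hdepth} and Theorem \eqref{double crossed product depth} displayed above.
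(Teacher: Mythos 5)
Your argument is exactly the paper's: the proof given there consists of combining Theorem \eqref{double crossed product depth} with Proposition \eqref{hdepth} applied to the Hopf subalgebra extension $H\subseteq K\bowtie H$ with quotient module $Q_H$, which is precisely your first three steps. Your closing paragraph reconciling $d(Q_H,\,{}_{K\bowtie H}\mathcal{M})$ with $d(Q_H,\,{}_{K\bowtie H}\mathcal{M}_{K\bowtie H})$ (and your remark that finiteness of $d(H,K\bowtie H)$ gives finiteness of $d_h$ via \eqref{finite depth}) fills in details the paper's one-sentence proof leaves implicit, but does not change the route.
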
 
  
\begin{proof}
This results from combining Theorem \eqref{double crossed product depth} and Theorem \eqref{hdepth}. 
\end{proof}

This result should not come as a surprise due to the normal basis property for a finite dimensional Hopf algebra extension $R\subseteq H$ \cite{Mo}.

Consider now the case where $H$ is a Hopf algebra and $K = H^{*cop}$ the $H$ dual with cooposite coalgebra structure. Denote $S$ and $S^*$ the respective antipodes and $\overline{S}$ and $\overline{S^*}$ their composition inverse. Let $H$ act on $H^*$ by the left coadjoint action:
\begin{equation}
h\rrhu f = h_1\rightharpoonup f \leftharpoonup \overline{S}(h_2)
\end{equation}
Similarly $H^*$ acts on $H$ from the right by the transpose of the right coadjoint action :
\begin{equation}
h\llhu f = \overline{S^*}(f_1)\rightharpoonup h \leftharpoonup f_2 
\end{equation}

\begin{de}
Given a Hopf algebra $H$ define its $\mathbf{Drinfel\vtick d}$ $\mathbf{double}$ $D(H)$ as the double crossed product of $H$ and $H^{*cop}$ where the product is given by
\begin{equation}
(f\bowtie h)(l \bowtie g) = f(h_1\rrhu l_2)\bowtie(h_2 \llhu l_1)g
\end{equation}
\end{de}

\begin{thm}
\label{Drinfeld h depth}
Let $H$ be a Hopf algebra, then 
\begin{equation}
d_h(H,D(H)) = 2d(H^{*cop}, _{D(H)}\mathcal{M}_{D(H)}) + 1
\end{equation}
\end{thm}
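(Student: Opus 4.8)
The plan is to recognise Theorem \eqref{Drinfeld h depth} as an immediate instance of the general machinery already set up, rather than to prove anything from scratch. The key observation is that $D(H)$ is by definition the double crossed product $H^{*cop}\bowtie H$ of the matched pair $(H^{*cop},H)$, and by the proposition preceding Theorem \eqref{double crossed product depth} every double crossed product is a factorisation algebra with $H$ appearing as a Hopf subalgebra via inclusion. So the first step is simply to invoke Theorem \eqref{double crossed product depth} with $K = H^{*cop}$, which yields at once
\begin{equation*}
d_h(H, D(H)) = d_h(H, H^{*cop}\bowtie H) = 2d(H^{*cop}, {}_{D(H)}\mathcal{M}_{D(H)}) + 1 .
\end{equation*}

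In more detail, the chain behind Theorem \eqref{double crossed product depth} is: $D(H) = A\otimes_\psi B$ with $A = H^{*cop}$, $B = H$, and $\psi(h\otimes f) = (h_1\rrhu f_2)\bowtie(h_2\llhu f_1)$ the factorisation coming from the matched pair maps $\rrhu,\llhu$. Since $H$ is a finite dimensional Hopf algebra it is an augmented algebra (via its counit $\varepsilon_H$), so Corollary \eqref{augmented algebra equality} applies: the inequality of Theorem \eqref{thm45} becomes the equality $d_h(H, D(H)) = 2d(H^{*cop}, {}_{D(H)}\mathcal{M}_{D(H)}) + 1$. The only thing one has to check to be allowed to run Theorem \eqref{thm45} is that ${}_{D(H)}\mathcal{M}$ (equivalently ${}_{D(H)}\mathcal{M}_{D(H)}$) is Krull--Schmidt, and this is automatic: $D(H)$ is a finite dimensional algebra over a field, so its finite dimensional module category has the Krull--Schmidt property. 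Likewise Theorem \eqref{prop 1} identifies $D(H)^{\otimes_H(n)} \cong (H^{*cop})^{\otimes(n)}\otimes H$ as $D(H)$-bimodules, which is the structural fact driving the whole argument.

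Thus the proof is essentially one line — ``apply Corollary \eqref{augmented algebra equality} (equivalently Theorem \eqref{double crossed product depth}) to the factorisation algebra $D(H) = H^{*cop}\otimes_\psi H$, using that $H$ is augmented'' — together with the two harmless verifications above (factorisation structure of $D(H)$, Krull--Schmidt of the module category). There is no real obstacle here: the content was already spent in proving Theorem \eqref{prop 1}, Theorem \eqref{thm45} and Corollary \eqref{augmented algebra equality}. If anything, the one place that deserves a word of care is making sure that the subalgebra of $D(H)$ playing the role of ``$B$'' in the factorisation-algebra formalism is exactly the copy of $H$ sitting as $\varepsilon_H^{*}\bowtie H$ (so that $D(H)^{\otimes_H(n)}$ really is the object appearing in Theorem \eqref{prop 1}), and that the $D(H)$-bimodule structure used in the definition of module depth of $H^{*cop}$ is the one coming from multiplication in $D(H)$ restricted to $H^{*cop}\bowtie 1_H$; both are immediate from the definition of $K\bowtie H$ in Definition \eqref{bicrossed product} and the product formula for $D(H)$.
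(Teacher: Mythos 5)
Your proposal is correct and follows exactly the paper's route: the paper's own proof is the one-liner ``this is just Theorem \eqref{double crossed product depth}'' (itself an instance of Corollary \eqref{augmented algebra equality} applied to the factorisation algebra $D(H)=H^{*cop}\otimes_\psi H$), and your additional verifications (Krull--Schmidt property, the identification of the copy of $H$ inside $D(H)$) are harmless elaborations of the same argument.
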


\begin{proof}
This is a just Theorem \eqref{double crossed product depth}. 
\end{proof}

\begin{pr}
\label{drinfeld minimum depth}
Let $H$ be Hopf algebra, then
\begin{equation}
3 \leq d(H,D(H))
\end{equation}
\end{pr}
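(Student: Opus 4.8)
The plan is to show that the extension $H \subseteq D(H)$ can never have depth $1$ or $2$, so that the minimum depth is at least $3$. Recall that $D(H) = H^{*cop} \bowtie H$ is a double crossed product, hence a factorisation algebra, and that both $H$ and $H^{*cop}$ sit inside $D(H)$ as subalgebras with $H \cap H^{*cop} = k \cdot 1$. The key structural facts I would lean on are: first, $\dim D(H) = (\dim H)^2$, so $H$ is a proper subalgebra whenever $\dim H > 1$ (the case $\dim H = 1$ being trivial, where $D(H) = k$ and all depths are $1$; I would either exclude this or note the inequality holds vacuously there); second, the criteria for depth $1$ and depth $2$ recalled in the first Example of Section~\ref{subalgebra depth}.

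First I would rule out depth $1$. By that Example, $d(H, D(H)) = 1$ forces ${}_H D(H)_H \sim {}_H H_H$, and in particular $D(H) \cong H \otimes_{Z(H)} D(H)^H$ where $D(H)^H$ is the centraliser of $H$ in $D(H)$. I would argue that the centraliser of $H$ in $D(H)$ is too small for this to hold when $\dim H > 1$: an element of $H^{*cop}$ centralising all of $H$ would, via the straightening relations of the Drinfel$\vtick$d double (the product formula $(f \bowtie h)(l \bowtie g) = f(h_1 \rrhu l_2) \bowtie (h_2 \llhu l_1)g$), force the coadjoint actions $\rrhu$ and $\llhu$ to be trivial, which collapses $D(H)$ to $H \otimes H^*$ and contradicts the nondegeneracy built into the double. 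More cleanly, one can invoke that depth $1$ implies $H$-separability of $D(H)$ over $H$, or that $d(H,D(H)) = 1$ would make $H \subseteq D(H)$ an $H$-separable, hence in particular separable and "centrally small" extension, which is incompatible with $D(H)$ being a free $H$-module of rank $\dim H > 1$ with a genuinely noncommutative straightening.

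Next I would rule out depth $2$. Here the cleanest route is to use the Hopf-algebraic characterisation recalled in the Example: for a finite-dimensional Hopf algebra extension $R \subseteq H'$, depth $2$ is equivalent to \emph{normality} of the extension. So I must show $H$ is \emph{not} a normal Hopf subalgebra of $D(H)$ in general — i.e. $D(H)$ does not conjugate $H$ into itself, equivalently $H^+ D(H) \ne D(H) H^+$, equivalently the quotient module $Q_H$ is not one-dimensional. By Theorem~\ref{depth of Q and D(H)}, $d(Q_H, {}_{D(H)}\mathcal{M}_{D(H)}) = d(H^{*cop}, {}_{D(H)}\mathcal{M}_{D(H)})$, and this module depth is $0$ precisely when $H^{*cop}$ restricts to a sum of trivial modules, which again forces the actions to degenerate. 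Alternatively and most directly: $d(H, D(H)) = 2$ would by Corollary~\ref{h depth ineq} (with $A = H^{*cop}$) force $2d(H^{*cop}, {}_{D(H)}\mathcal{M}_{D(H)}) + 2 \le 2$, i.e. $d(H^{*cop}, {}_{D(H)}\mathcal{M}_{D(H)}) = 0$, meaning $H^{*cop}$ is a direct sum of trivial $D(H)$-modules; but $H^{*cop}$ contains $\varepsilon_H$ (the unit, a trivial summand) together with non-trivial summands as soon as $\dim H > 1$, since the regular-type action of $D(H)$ on its subalgebra $H^{*cop}$ is faithful in an appropriate sense. Chaining the two exclusions with Theorem~\ref{Drinfeld h depth} and the inequality \eqref{finite depth} relating $d$ and $d_h$ then gives $d(H,D(H)) \ge 3$.

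The main obstacle I anticipate is making the "the actions degenerate, contradiction" step fully rigorous: I need a clean argument that $d(H^{*cop}, {}_{D(H)}\mathcal{M}_{D(H)}) \ge 1$, i.e. that $H^{*cop}$ is not a trivial $D(H)$-bimodule, for every non-trivial $H$. The slick way is to observe that module depth $0$ for $H^{*cop}$ as a $D(H)$-bimodule would, by Theorem~\ref{double crossed product depth} / Theorem~\ref{Drinfeld h depth}, give $d_h(H, D(H)) = 1$, i.e. $H$-separability of $D(H)/H$; but $H \subseteq D(H)$ with $D(H)$ free of rank $n = \dim H$ over $H$ cannot be $H$-separable unless $n = 1$ — one can see this by a dimension/trace argument, or by noting $H$-separability would propagate to $d(H,D(H)) \le 2 = $ normality, which one excludes as above. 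So the two exclusions are really intertwined, and I would present them together: assuming $d(H,D(H)) \le 2$ leads, via Corollary~\ref{h depth ineq} and Theorem~\ref{Drinfeld h depth}, to $H^{*cop}$ being $D(H)$-bimodule-trivial, which forces $\dim H = 1$; hence for $\dim H > 1$ we get $d(H,D(H)) \ge 3$, and for $\dim H = 1$ the statement is immediate.
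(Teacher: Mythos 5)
Your overall reduction is the same one the paper makes: use the factorisation-algebra depth inequalities (you via Corollary \ref{h depth ineq}, the paper via the left-hand side of \eqref{minimum depth ineq}) together with Theorem \ref{depth of Q and D(H)} to conclude that $d(H,D(H))\leq 2$ would force the module depth of $H^{*cop}$ (equivalently of $Q_H$) to be $0$, i.e.\ $H^{*cop}$ a direct sum of trivial modules. But the step that actually finishes the proof is exactly the one you leave open, and the patches you propose for it do not hold up. Saying the action of $D(H)$ on $H^{*cop}$ is ``faithful in an appropriate sense'' is not an argument; the claim that an extension which is free of rank $>1$ can never be $H$-separable is false in general (e.g.\ $k\subseteq M_n(k)$ is $H$-separable, since $M_n\otimes_k M_n\cong n^2 M_n$ as bimodules, and it is free of rank $n^2$), so no ``dimension/trace argument'' at that level of generality exists; and your fallback --- that $H$-separability would give $d(H,D(H))\leq 2$, ``which one excludes as above'' --- is circular, because the exclusion of depth $\leq 2$ is precisely what is being proved. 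A further conflation: ruling out depth $1$ by identifying it with $H$-separability confuses depth $1$ (${}_H D(H)_H\sim {}_H H_H$) with $H$-depth $1$ ($D(H)\otimes_H D(H)\sim D(H)$ as $D(H)$-bimodules); these are different conditions.

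What is missing is a concrete proof that $H^{*cop}$ (or $Q_H$) is not a sum of trivial modules when $\dim H>1$. The paper supplies this by following Example \ref{Heisenberg algebra}: restrict to ${}_H\mathcal{M}_H$, use that $H$ is Frobenius so $H^*$ is self-dual as a bimodule, and use $d(H^*,{}_{H^*}\mathcal{M}_{H^*})=1$; then the left inequality of \eqref{minimum depth ineq} gives $d(H,D(H))\geq 2\cdot 1+1=3$. Alternatively, within your setup one can argue directly with the quotient module: $Q_H=D(H)/H^+D(H)$ is generated by $\overline{1}$ under left multiplication, so if it were a trivial $D(H)$-module it would be one-dimensional, whereas freeness of $D(H)$ over $H$ gives $\dim Q_H=\dim H>1$; Theorem \ref{depth of Q and D(H)} then transfers this nontriviality to $H^{*cop}$ and Corollary \ref{h depth ineq} finishes. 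Either of these closes the gap; as written, your proof does not. (Also, the case $\dim H=1$ is not one where the inequality ``holds vacuously'' --- there $d(H,D(H))=1$ --- so the statement implicitly assumes $H$ nontrivial; this side issue is shared with the paper.)
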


\begin{proof}
Combine Equation \eqref{minimum depth ineq} and Theorem \eqref{depth of Q and D(H)} and follow the reasoning in Example \eqref{Heisenberg algebra} to get the result.
\end{proof}

This in particular means that the extension $H\subseteq D(H)$ is never normal.

\end{document}